\numberwithin{equation}{section}
\newtheorem{theorem}{Theorem}[section]
\newtheorem{corollary}[theorem]{Corollary}
\newtheorem{lemma}[theorem]{Lemma}
\newtheorem{proposition}[theorem]{Proposition}
\theoremstyle{definition}
\newtheorem{definition}[theorem]{Definition}
\newcommand{\C}{\mathbf{C}}
\newcommand{\Z}{\mathbf{Z}}
\newcommand{\h}{\mathfrak{h}}
\newcommand{\g}{\mathfrak{g}}
\title[Ringel-Hall algebra construction of quantum Borcherds-Bozec algebras]
{Ringel-Hall Algebra Construction of \\ Quantum Borcherds-Bozec
Algebras}
\author[Seok-Jin Kang]
{Seok-Jin Kang}
\address{Joeun Mathematical Research Institute,
441 Yeoksam-ro, Gangnam-gu, Seoul 06196, Korea}
         \email{soccerkang@hotmail.com}
\keywords{quantum Borcherds-Bozec algebra, Ringel-Hall algebra,
Green-Lusztig algebra}
\subjclass[2010] {17B37, 17B67, 16G20}
\begin{document}

\begin{abstract}
We give the Ringel-Hall algebra construction of the positive half of
quantum Borcherds-Bozec algebras as the generic composition algebras
of quivers with loops.
\end{abstract}

\maketitle

\section*{Introduction}

\vskip 2mm

The {\it Hall algebra}, introduced by Steinitz \cite{St1901} and
rediscovered by Hall \cite{Hall1959}, is an associative algebra over
$\C$ with a basis consisting of isomorphism classes of finite
abelian $p$-groups. The finite abelian $p$-groups are parametrized
by partitions and the structure coefficients of the Hall algebra are
given by certain polynomials in $p$ with integral coefficients,
which are called the {\it Hall polynomials}. It turned out that
there is a close connection between the Hall algebras and the theory
of symmetric functions.

\vskip 3mm

In \cite{Ringel90}, Ringel generalized the notion of Hall algebras
to abelian categories with some finiteness conditions such as the
category of representations of a quiver. The  {\it Ringel-Hall
algebra} is an associative algebra over $\C$ with a basis consisting
of isomorphism classes of objects in a given abelian category, where
the multiplication is defined in terms of the space of extensions.
When we deal with the categories of representations of quivers
without loops, the Ringel-Hall algebras provide a realization of the
positive half of quantum groups associated with symmetric
generalized Cartan matrices \cite{Ringel90, Green95}. The
Ringel-Hall algebra construction of quantum groups is one of the
main inspirations for the Kashiwara-Lusztig crystal/canonical basis
theory \cite{Kas91, Lus90, Lus91}.

\vskip 3mm

Let us consider the quivers with loops. Then one can associate
symmetric Borcherds-Cartan matrices, which yield {\it Borcherds
algebras} or {\it generalized Kac-Moody algebras}. The Borcherds
algebras were introduced by Borcherds in his study of the Monstrous
Moonshine \cite{Bor88}. A special example of these algebras, the
Monster Lie algebra, played an important role in the proof of the
Moonshine Conjecture \cite{Bor92}. The quantum deformations of
Borcherds algebras and their modules were constructed in
\cite{Kang95}. In \cite{KS06}, the Ringel-Hall algebra construction
and the Kashiwara-Lusztig crystal/canonical basis theory were
generalized to the case of quantum Borcherds algebras (see also
\cite{JKK05}).

\vskip 3mm

The {\it Borcherds-Bozec algebras} are further generalizations of
Borcherds algebras. They are also defined by the generators and
relations coming from Borcherds-Cartan matrices, but they have far
more generators than Borcherds algebras. That is, for each simple
root, there are infinitely many generators whose degrees are
positive integral multiples of the given simple root. Thus in
addition to the Serre-type relations, we need to have the
Drinfel'd-type relations. The quantum Boecherds-Bozec algebras arise
as a natural algebraic structure behind the theory of perverse
sheaves on the representation varieties of quivers with loops and a
lot of interesting progresses are still under way (\cite{Bozec2014b,
Bozec2014c, BSV2016}, etc.).

\vskip 3mm

In this paper, we give the Ringel-Hall algebra construction of the
positive half of quantum Borcherds-Bozec algebras as the generic
composition algebras of quivers with loops. The main ingredients of
our work are Green's Theorem on symmetric bilinear forms of
Green-Lusztig algebras (Theorem \ref{thm:Green95}) and the
representations of quivers given in \eqref{eq:E_{il}} that
correspond to the higher degree generators of quantum
Borcherds-Bozec algebras.

\vskip 10mm

\section{Green-Lusztig algebras}

\vskip 2mm

Let $\mathscr {A}$ be an integral domain containing $\Z$ and an
invertible element $v$. Let $X$ be a set of alphabets (possibly
countably infinite) and let $\Lambda = \bigoplus_{x \in X} \Z
\alpha_{x}$ be the free abelian group on $X$ endowed with a
symmetric bilinear form $( \ , \ ) : \Lambda \times \Lambda
\rightarrow \Z$. The quadruple $(X,  ( \ , \ ), \mathscr{A}, v)$ is
called a {\it Green-Lusztig datum}. We write $\Lambda^{+} = \sum_{x
\in X} \Z_{\ge 0} \alpha_{x}$.

\vskip 3mm

\begin{definition} \ {\rm Let $(X, ( \ , \ ), {\mathscr A}, v)$ be a
Green-Lusztig datum. We say that an associative ${\mathscr
A}$-algebra $L$ is a {\it Green-Lusztig algebra belonging to the
class ${\mathscr L}(X, ( \ , \ ), {\mathscr A}, v)$} if the
following conditions are satisfied.
\begin{itemize}
\item[(a)] $L=\bigoplus_{\alpha \in \Lambda^{+}} L_{\alpha}$ is a
$\Lambda^{+}$-graded algebra such that

\begin{itemize}
\item[(i)] $L$ is generated by the elements $u_{x}$ $(x \in X)$,

\item[(ii)] $L_{0}={\mathscr A} \mathbf{1}$, where $\mathbf{1}$ is
the identity element of $L$.
\end{itemize}

\item[(b)] There is an ${\mathscr A}$-bilinear map $\delta: L
\rightarrow L \otimes_{\mathscr A} L$ such that

\begin{itemize}
\item[(i)]$ \delta(u_{x}) = u_{x} \otimes 1 + 1 \otimes u_{x} \ \
\text{for all} \ x \in X$,

\item[(ii)] $\delta$ is an ${\mathscr A}$-algebra homomorphism,
where the multiplication on $L \otimes_{\mathscr A} L$ is given by
$$(x_1 \otimes x_2) (y_1 \otimes y_2) := v^{(\beta_2, \gamma_1)} (x_1
y_1 \otimes x_2 y_2) \ \ \text{for} \ x_{i} \in L_{\beta_{i}}, \
y_{i} \in L_{\gamma_{i}} \ \ (i=1,2).$$
\end{itemize}

\item[(c)] There is a symmetric ${\mathscr A}$-bilinear form
$( \ , \ )_{L} : L \times L \rightarrow {\mathscr A}$ such that

\begin{itemize}
\item[(i)] $(L_{\alpha}, L_{\beta})_{L} =0$ if $\alpha \neq \beta$,

\item[(ii)] $(\mathbf{1}, \mathbf{1})_{L}=1$,

\item[(iii)] $(u_{x}, u_{x})_{L} \neq 0$ for all $x \in X$,

\item[(iv)] $(a, bc)_{L} = (\delta(a), b \otimes c)_{L}$ for all
$a,b,c \in L$, where $$(x_1 \otimes x_2, y_1 \otimes y_2)_{L} :=
(x_1, y_1)_{L} \, (x_2, y_2)_{L} \ \ (x_1, x_2, y_1, y_2  \in L).$$
\end{itemize}
\end{itemize}
}
\end{definition}

\vskip 3mm

Let $\beta = \sum_{x \in X} d_{x} \alpha_{x} \in \Lambda^{+}$ with
$ht(\beta):=\sum_{x \in X} d_{x} =r$. Set $$X(\beta):=\{w =(x_1,
\ldots, x_r) \mid \alpha_{x_1} + \cdots + \alpha_{x_r} = \beta \}.$$
If $L = \bigoplus_{\beta \in \Lambda^{+}} L_{\beta}$ is a
Green-Lusztig algebra in ${\mathscr L}(X, ( \ , \ ), {\mathscr A},
v)$, then $L_{\beta}$ is the ${\mathscr A}$-span of monomials of the
form $u_{w}= u_{x_1} \cdots u_{x_r}$ such that $w=(x_{1}, \ldots,
x_{r}) \in X(\beta)$. Note that if $w \in X(\beta)$, $w' \in
X(\beta')$ with $\beta \neq \beta'$, by (c), we have $(u_{w},
u_{w'})_{L} =0$.

\vskip 3mm

\begin{theorem} \cite{Green95} \label{thm:Green95} \ {\rm Let $\beta =
\sum_{x \in X} d_{x} \alpha_{x} \in \Lambda^{+}$ and $w, w' \in
X(\beta)$. Then there exists a Laurent polynomial $P_{w,w'}(t) \in
\Z[t, t^{-1}]$ such that for any Green-Lusztig datum $(X, ( \ , \ ),
\mathscr{A}, v)$ and any Green-Lusztig algebra in ${\mathscr L}(X, (
\ , \ ), {\mathscr A}, v)$, we have
$$(u_{w}, u_{w'})_{L} = P_{w, w'}(v) B_{\beta}(L),$$ where
$B_{\beta}(L) = \prod_{x \in X} (u_x, u_x)_{L}^{d_x}$. }
\end{theorem}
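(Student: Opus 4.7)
The plan is to induct on $r = ht(\beta)$ and use the pairing-coproduct compatibility (c)(iv) to reduce the lengths of both arguments simultaneously. The base case $r=0$ is immediate: $u_w = u_{w'} = \mathbf{1}$, the empty product $B_0(L) = 1$, and $(\mathbf{1},\mathbf{1})_L = 1$ by (c)(ii), so $P_{w,w'}(t) = 1$.

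For the inductive step, write $w' = (y_1,\ldots,y_r)$ and $w'' = (y_2,\ldots,y_r)$, so that $u_{w'} = u_{y_1}\, u_{w''}$. By (c)(iv),
$$(u_w, u_{w'})_L = (\delta(u_w),\, u_{y_1}\otimes u_{w''})_L.$$
Since $\delta$ is an algebra homomorphism into the twisted tensor product,
$$\delta(u_w) \;=\; \prod_{i=1}^r \bigl(u_{x_i}\otimes 1 + 1\otimes u_{x_i}\bigr),$$
and expanding this product produces a sum indexed by subsets $S\subseteq \{1,\ldots,r\}$ (the positions routed to the first tensor slot), each term being of the form $v^{c(S,w)}\, u_{x_S}\otimes u_{x_{S^c}}$ with orders inherited from $w$. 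The integer $c(S,w)$ is accumulated by multiplying the factors left to right: at each position $i\in S$, the twisted rule contributes $v^{(\gamma_i,\alpha_{x_i})}$, where $\gamma_i$ is the degree currently sitting in the second tensor slot.

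Pairing against $u_{y_1}\otimes u_{w''}$ and invoking the orthogonality (c)(i), only terms whose first tensor factor has degree $\alpha_{y_1}$ contribute. Because $\Lambda$ is free on $\{\alpha_x\}$ and $\alpha_{y_1}$ is a single generator, this forces $S = \{i\}$ with $x_i = y_1$; a direct bookkeeping then gives $c(\{i\},w) = \sum_{j<i} (\alpha_{x_j},\alpha_{x_i})$. Consequently,
$$(u_w, u_{w'})_L \;=\; \sum_{i:\, x_i = y_1} v^{\sum_{j<i}(\alpha_{x_j},\alpha_{x_i})}\,(u_{x_i},u_{y_1})_L\,(u_{w\setminus i},u_{w''})_L.$$
Since $x_i = y_1$ as letters one has $(u_{x_i},u_{y_1})_L = (u_{y_1},u_{y_1})_L$, and the inductive hypothesis applied to the pair $(w\setminus i,\, w'')$ of height $r-1$ yields $(u_{w\setminus i},u_{w''})_L = P_{w\setminus i, w''}(v)\,B_{\beta-\alpha_{y_1}}(L)$. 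Using $B_\beta(L) = (u_{y_1},u_{y_1})_L\cdot B_{\beta-\alpha_{y_1}}(L)$, everything collects into $P_{w,w'}(v) B_\beta(L)$ with
$$P_{w,w'}(t) \;:=\; \sum_{i:\, x_i = y_1} t^{\sum_{j<i}(\alpha_{x_j},\alpha_{x_i})}\,P_{w\setminus i,\, w''}(t) \;\in\; \Z[t,t^{-1}],$$
which is manifestly independent of the particular algebra $L$.

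The main technical obstacle is the careful bookkeeping of $v$-powers arising from the twisted multiplication on $L\otimes_{\mathscr{A}} L$: one must verify that expanding $\prod_i(u_{x_i}\otimes 1 + 1\otimes u_{x_i})$ left to right really gives the exponent $\sum_{j<i}(\alpha_{x_j},\alpha_{x_i})$ on the surviving $S=\{i\}$ term, and that these exponents are genuine integers so that $P_{w,w'}(t)$ lies in $\Z[t,t^{-1}]$. Everything else in the argument is a direct and essentially mechanical application of axioms (a)--(c).
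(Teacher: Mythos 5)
Your proof is correct. The paper does not reproduce an argument for this theorem---it simply cites Green (1995)---and your inductive argument (peel the leftmost letter of $w'$, use the adjunction $(u_w,u_{y_1}u_{w''})_L=(\delta(u_w),u_{y_1}\otimes u_{w''})_L$, expand $\delta(u_w)=\prod_i(u_{x_i}\otimes 1+1\otimes u_{x_i})$ in the twisted tensor algebra, and use (c)(i) together with freeness of $\Lambda$ to kill every term except $S=\{i\}$ with $x_i=y_1$) is the standard one from Green's paper. The bookkeeping $c(\{i\},w)=\sum_{j<i}(\alpha_{x_j},\alpha_{x_i})$ is right, the factor $(u_{y_1},u_{y_1})_L$ absorbs into $B_\beta(L)$ as claimed, and since $(\,\cdot\,,\,\cdot\,)$ is $\Z$-valued the resulting recursion for $P_{w,w'}$ lands in $\Z[t,t^{-1}]$ and is visibly independent of $\mathscr{A}$, $v$ and $L$.
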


\vskip 2mm

\noindent {\it Remark.} The point is that $B_{\beta}(L)$ depends
only on $\beta$ and $L$.

\vskip 3mm

\begin{lemma} \cite{Green95} \label{lem:Green95} \ {\rm Let $L$ be
a Green-Lusztig algebra in ${\mathscr L}(X, ( \ , \ ), {\mathscr A},
v)$ and let $u=\sum_{w \in X(\beta)} c_{w} u_{w} \in L$ $(c_{w} \in
{\mathscr A})$. Then $u \in \text{rad}( \ , \ )_{L}$ if and only if
$$\sum_{w \in X(\beta)} c_{w} P_{w,w'}(v)=0 \ \ \text{for all} \ w' \in
X(\beta), \ \beta \in \Lambda^{+}.$$ }
\end{lemma}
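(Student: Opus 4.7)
The plan is to reduce the lemma to a direct application of Green's Theorem (Theorem \ref{thm:Green95}), using the grading and non-degeneracy hypotheses built into the definition of a Green-Lusztig algebra.

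First I would decompose $u = \sum_{\beta} u^{(\beta)}$ into its homogeneous components $u^{(\beta)} = \sum_{w \in X(\beta)} c_{w} u_{w} \in L_{\beta}$. By condition (c)(i), $(L_{\alpha}, L_{\beta})_{L} = 0$ whenever $\alpha \neq \beta$, so an element lies in $\text{rad}(\ ,\ )_{L}$ if and only if each of its homogeneous components does. This reduces the claim to showing, for a fixed $\beta \in \Lambda^{+}$ and a homogeneous element $u = \sum_{w \in X(\beta)} c_{w} u_{w}$, that $u \in \text{rad}(\ ,\ )_{L}$ if and only if $\sum_{w} c_{w} P_{w,w'}(v) = 0$ for every $w' \in X(\beta)$.

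Next I would use that $L_{\beta}$ is the $\mathscr{A}$-span of the monomials $\{u_{w'} : w' \in X(\beta)\}$, as noted in the paragraph preceding Theorem \ref{thm:Green95}. Together with the previous step, this means $u \in \text{rad}(\ ,\ )_{L}$ if and only if $(u, u_{w'})_{L} = 0$ for every $w' \in X(\beta)$. Expanding by $\mathscr{A}$-bilinearity and applying Theorem \ref{thm:Green95} termwise gives
\[
(u, u_{w'})_{L} \;=\; \sum_{w \in X(\beta)} c_{w} (u_{w}, u_{w'})_{L} \;=\; B_{\beta}(L) \sum_{w \in X(\beta)} c_{w} P_{w,w'}(v).
\]

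Finally, the only remaining point is to cancel the factor $B_{\beta}(L) = \prod_{x \in X} (u_{x}, u_{x})_{L}^{d_{x}}$. By condition (c)(iii) each $(u_{x}, u_{x})_{L}$ is a nonzero element of $\mathscr{A}$, and since $\mathscr{A}$ is an integral domain, $B_{\beta}(L)$ is a nonzero, hence non-zero-divisor, element of $\mathscr{A}$. Therefore $(u, u_{w'})_{L} = 0$ if and only if $\sum_{w} c_{w} P_{w,w'}(v) = 0$, giving the lemma. There is no substantial obstacle; the only subtlety worth flagging is the use of the integral domain hypothesis on $\mathscr{A}$ together with (c)(iii) to guarantee that $B_{\beta}(L)$ may be cancelled — without this one could only conclude that $B_{\beta}(L) \sum_{w} c_{w} P_{w,w'}(v) = 0$.
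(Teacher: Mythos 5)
Your proof is correct and complete. The paper states this lemma purely as a citation to \cite{Green95} and supplies no proof of its own, so there is nothing to compare against directly; your derivation---reducing to homogeneous components via (c)(i), using that $L_{\beta}$ is spanned by the monomials $u_{w'}$ with $w' \in X(\beta)$, applying Theorem \ref{thm:Green95} termwise, and then cancelling $B_{\beta}(L)$ after observing that it is a product of nonzero elements in the integral domain $\mathscr{A}$ by (c)(iii)---is precisely the standard argument, and you correctly identify the integral-domain hypothesis as the one nontrivial ingredient needed for the cancellation step.
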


\vskip 8mm

\section{Quantum Borcherds-Bozec algebras}

\vskip 2mm

Let $I$ be an index set (possibly countably infinite). A square
matrix $A=(a_{ij})_{i,j \in I}$ is called an {\it even symmetrizable
Borcherds-Cartan matrix} if

\begin{itemize}
\item[(i)] $a_{ii}=2, 0, -2, -4, \ldots$,

\item[(ii)] $a_{ij} \in \Z_{\le 0}$ for $i \neq j$,

\item[(iii)] $a_{ij}=0$ if and only if $a_{ji}=0$,

\item[(iv)] there is a diagonal matrix $D=\text{diag} (s_i \in
\Z_{>0} \mid i \in I)$ such that $DA$ is symmetric.
\end{itemize}

\vskip 3mm

Set $I^{\text{re}}:=\{ i \in I \mid a_{ii}=2 \}$, the set of {\it
real indices} and $I^{\text{im}}:= \{ i \in I \mid a_{ii} \le 0 \}$,
the set of {\it imaginary indices}. We denote by $I^{\text{iso}}: =
\{ i \in I \mid a_{ii}=0 \}$ the set of {\it isotropic indices}.

\vskip 3mm

A {\it Borcherds-Cartan datum} consists of

\begin{itemize}
\item[(a)] an even symmetrizable Borcherds-Cartan matrix
$A=(a_{ij})_{i,j \in I}$,

\item[(b)] a free abelian group $P$, the {\it weight lattice},

\item[(c)] $P^{\vee}:=Hom(P, \Z)$, the {\it dual weight lattice},

\item[(d)] $\Pi = \{ \alpha_i \in P \mid i \in I \}$, the set of
{\it simple roots},

\item[(e)] $\Pi^{\vee} = \{ h_i \in P^{\vee} \mid i \in I \}$, the
set of {\it simple coroots}
\end{itemize}
satisfying the following conditions
\begin{itemize}
\item[(i)] $\langle h_i, \alpha_j \rangle = a_{ij}$ for $i, j \in
I$,

\item[(ii)] $\Pi$ is linearly independent over $\C$,

\item[(iii)] for every $i \in I$, there is an element $\varpi_{i} \in
P$ such that $\langle h_j, \varpi_i \rangle = \delta_{ij}$ for all
$j \in I$.
\end{itemize}

\vskip 3mm We denote by $R:=\bigoplus_{i \in I} \Z \alpha_i$ the
{\it root lattice} and set $R^{+} := \sum_{i \in I} \Z_{\ge 0}
\alpha_i$.

\vskip 3mm

Let $\h: = \C \otimes_{\Z} P^{\vee}$. Since $A$ is symmetrizable and
$\Pi$ is linearly independent, there is a non-degenerate symmetric
bilinear form $( \ , \ )$ on $\h^*$ such that
\begin{equation} \label{eq:h-bilinear}
(\alpha_i, \lambda) = s_i \, \langle h_i, \lambda \rangle \ \
\text{for all} \ i \in I, \, \lambda \in \h^*.
\end{equation}

\vskip 3mm

Let $v$ be an indeterminate and set
$$v_i = v^{s_i}, \ \ v_{(i)} = v^{(\alpha_i, \alpha_i) / 2}, \ \
[n]_{i} = \dfrac{v_{i}^{n} - v_{i}^{-n}}{v_{i} - v_{i}^{-1}}.$$ Note
that $v_i = v_{(i)}$ if $i \in I^{\text{re}}$.

\vskip 3mm

Let $I^{\infty}:=(I^{\text{re}} \times \{1\}) \cup (I^{\text{im}}
\times \Z_{>0})$. We will often identify $I^{\text{re}} \times
\{1\}$ with $I^{\text{re}}$. Let $\Lambda := \bigoplus_{(i,l) \in
I^{\infty}} \Z \alpha_{il}$ be the free abelian group on
$I^{\infty}$. Then we have a symmetric bilinear form $( \ , \ ):
\Lambda \times \Lambda \rightarrow \Z$ given by
\begin{equation} \label{eq:lambda-bilinear}
(\alpha_{ik}, \alpha_{jl}):= kl(\alpha_i, \alpha_j) \ \ \text{for
all} \ \ (i,k), (j,l) \in I^{\infty}.
\end{equation}
Then $(I^{\infty}, ( \ , \ ), \C(v), v)$ is a Green-Lusztig datum.

\vskip 3mm

Let ${\mathscr E}$ be the free associative algebra over $\C(v)$
generated by the symbols $e_{il}$ for $(i,l) \in I^{\infty}$. Set
$\text{deg} e_{il}:= l \alpha_i$ for $(i,l) \in I^{\infty}$. Then
${\mathscr E}$ becomes an $R^{+}$-graded algebra ${\mathscr E} =
\bigoplus_{\beta \in R^{+}} {\mathscr E}_{\beta}$, where ${\mathscr
E}_{\beta}$ is the $\C(v)$-span of monomials of the form $e_{i_1,
l_1} \cdots e_{i_r, l_r}$ such that $l_1 \alpha_{i_1} + \cdots + l_r
\alpha_{i_r} = \beta$. We will denote by $|u|$ the degree of a
homogeneous element $u$ in ${\mathscr E}$.

\vskip 3mm

Define a {\it twisted multiplication} on ${\mathscr E} \otimes
{\mathscr E}$ by
\begin{equation} \label{eq:twisted}
(x_1 \otimes x_2) \, (y_1 \otimes y_2) = v^{(|x_2|, |y_1|)} x_1 y_1
\otimes x_2 y_2
\end{equation}
and a {\it co-multiplication} $\delta: {\mathscr E} \rightarrow
{\mathscr E} \otimes {\mathscr E}$ by
\begin{equation}\label{eq:co-mult}
\delta(e_{il}) = \sum_{m+n=l} v_{(i)}^{mn} e_{im} \otimes e_{in} \ \
\text{for all} \ \ (i,l) \in I^{\infty}.
\end{equation}
Since ${\mathscr E}$ is the free associative algebra on $\{e_{il}
\mid (i,l) \in I^{\infty} \}$, the map $\delta$ can be extended to a
well-defined algebra homomorphism.

\vskip 3mm

\begin{proposition} \cite{Bozec2014b, Bozec2014c, Lus00, Ringel96}
\label{prop:bozec2014b} \ {\rm For any family $\nu =
(\nu_{il})_{(i,l) \in I^{\infty}}$ of non-zero elements in $\C(v)$,
there exists a bilinear form $( \ , \ )_{L}: {\mathscr E} \times
{\mathscr E} \rightarrow \C(v)$ such that
\begin{itemize}
\item[(a)] $(x, y)_{L} =0$ if $|x| \neq |y|$,

\item[(b)] $(\mathbf{1}, \mathbf{1})_{L} = 1$,

\item[(c)] $(e_{il}, e_{il})_{L} = \nu_{il}$ for all $(i,l) \in
I^{\infty}$,

\item[(d)] $(x, yz)_{L} = (\delta(x), y \otimes z)$ for all $x,y,z
\in {\mathscr E}$.

\end{itemize}
}
\end{proposition}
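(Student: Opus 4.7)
The plan is to construct $(\ ,\ )_L$ explicitly by iterating property (d). First, conditions (a)--(c) already determine the form on generators: since the simple roots are linearly independent in $P$, the equality $l\alpha_i = m\alpha_j$ forces $i=j$ and $l=m$, so we are compelled to set $(e_{il}, e_{jm})_L = \delta_{ij}\delta_{lm}\nu_{il}$, together with $(\mathbf{1},\mathbf{1})_L = 1$ and $(\mathbf{1}, e_w)_L = (e_w, \mathbf{1})_L = 0$ for every monomial $e_w$ of positive degree.

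Next I would define iterated coproducts $\delta^{(n)} : \mathscr{E} \to \mathscr{E}^{\otimes(n+1)}$ by $\delta^{(0)} = \mathrm{id}$ and $\delta^{(n)} = (\delta \otimes \mathrm{id}^{\otimes(n-1)}) \circ \delta^{(n-1)}$, and declare
\[
(x,\; e_{i_1,l_1}\,e_{i_2,l_2}\cdots e_{i_r,l_r})_L := \bigl(\delta^{(r-1)}(x),\; e_{i_1,l_1}\otimes e_{i_2,l_2}\otimes\cdots\otimes e_{i_r,l_r}\bigr)_L,
\]
where the right-hand side is evaluated by applying the generator formula above to each tensor slot and multiplying. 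Extended bilinearly in $x$ and in the coefficients of the monomial expansion of the second argument, this produces a $\C(v)$-bilinear form on $\mathscr{E}\times\mathscr{E}$, and axioms (a)--(c) are then immediate from the grading together with the base values.

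The heart of the argument is that (d) holds for \emph{arbitrary} $y, z$, not only when $y$ is a single generator. This reduces to \emph{coassociativity} of $\delta$: taking $y = e_{j_1,m_1}\cdots e_{j_p,m_p}$ and $z = e_{k_1,n_1}\cdots e_{k_q,n_q}$ and unwinding the definitions yields $(\delta(x), y\otimes z)_L = \bigl((\delta^{(p-1)}\otimes\delta^{(q-1)})\delta(x),\; e_{j_1,m_1}\otimes\cdots\otimes e_{k_q,n_q}\bigr)_L$, which must agree with $(x, yz)_L = \bigl(\delta^{(p+q-1)}(x),\ldots\bigr)_L$. To verify $(\delta\otimes\mathrm{id})\circ\delta = (\mathrm{id}\otimes\delta)\circ\delta$, I would equip $\mathscr{E}^{\otimes 3}$ with its natural iterated twisted multiplication and observe that both sides are algebra homomorphisms from $\mathscr{E}$ into this algebra; freeness of $\mathscr{E}$ then reduces the check to each generator $e_{il}$, where a direct computation using \eqref{eq:co-mult} yields the common value $\sum_{a+b+c=l} v_{(i)}^{ab+ac+bc}\, e_{ia}\otimes e_{ib}\otimes e_{ic}$. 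I expect the main obstacle to be the bookkeeping needed to define the iterated twisted multiplication on $\mathscr{E}^{\otimes 3}$ so that both compositions become algebra maps with respect to a single consistent structure; once that setup is in place, coassociativity on generators and the remaining verifications reduce to routine exponent calculations.
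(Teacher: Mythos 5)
The paper does not supply its own proof of Proposition~\ref{prop:bozec2014b}; it cites it to Bozec, Lusztig and Ringel. Your construction is precisely the one those sources use: define the form on monomials $e_{i_1,l_1}\cdots e_{i_r,l_r}$ via the iterated coproduct $\delta^{(r-1)}$, pair tensor-slot-wise against single generators, extend bilinearly, and reduce property~(d) to coassociativity, which in turn is checked on generators using that $\delta$ is an algebra map for the twisted multiplication and that $\mathscr{E}$ is free. Your generator computation
$\sum_{a+b+c=l} v_{(i)}^{ab+ac+bc}\, e_{ia}\otimes e_{ib}\otimes e_{ic}$
is correct, and the bookkeeping you flag (endowing $\mathscr{E}^{\otimes 3}$ with the twist $v^{(|x_2|,|y_1|)+(|x_3|,|y_1|)+(|x_3|,|y_2|)}$) works out because $\delta$ preserves degree, so both $\delta\otimes\mathrm{id}$ and $\mathrm{id}\otimes\delta$ are algebra maps into this common algebra. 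The one place you should be slightly more explicit is the base pairing: when $i\in I^{\mathrm{im}}$ a tensor slot of $\delta^{(r-1)}(x)$ in degree $l_k\alpha_{i_k}$ can contain monomials of length $\ge 2$ (for instance $e_{i,1}e_{i,l_k-1}$), and you should say outright that such monomials pair to $0$ against $e_{i_k,l_k}$, so that $(\,\cdot\,,e_{i_1,l_1}\cdots e_{i_r,l_r})_L$ simply extracts the coefficient of $e_{i_1,l_1}\otimes\cdots\otimes e_{i_r,l_r}$ in $\delta^{(r-1)}(x)$ and multiplies by $\prod_k\nu_{i_k,l_k}$. With that made explicit, the argument is complete; axioms (a)--(d) are unchanged by this convention since (d) never forces the first argument to be decomposed, so you are only required to produce \emph{some} such form, and this is a consistent choice.
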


\vskip 3mm

We define $\widehat {U}$ to be the associative algebra over $\C(v)$
generated by the elements $K_{i}^{\pm 1}$ $(i \in I)$, $e_{il},
f_{il}$ $((i,l) \in I^{\infty})$ with defining relations
\begin{equation} \label{eq:rels}
\begin{aligned}
& K_{i} K_{i}^{-1} = K_{i}^{-1} K_{i} =\mathbf{1}, \ \ K_{i} K_{j} =
K_{j} K_{i} \qquad (i, j \in I), \\
& K_{i} e_{jl} K_{i}^{-1} = v_{i}^{l a_{ij}} e_{jl}, \ \ K_{i}
f_{jl} K_{i}^{-1} = v_{i}^{-l a_{ij}} f_{jl}  \qquad (i \in I, (j,l)
\in
I^{\infty}), \\
& \sum_{k=0}^{1-l a_{ij}} (-1)^{k} e_{i}^{(k)} e_{jl}\, e_{i}^{(1- l
a_{ij} - k)} =0 \quad  \text{for} \ \ i \in I^{\text{re}}, \ i \neq
(j,l),\\
& \sum_{k=0}^{1- l a_{ij}} (-1)^{k} f_{i}^{(k)}  f_{jl}\, f_{i}^{(1
- l a_{ij}-k)} =0 \quad  \text{for} \ \ i \in I^{\text{re}}, \ i
\neq
(j,l), \\
& [e_{ik}, e_{jl}] =0  \ \ \text{if} \ a_{ij}=0.
\end{aligned}
\end{equation}

\noindent Here, we use the notation $e_{i}^{(k)} = e_{i}^k /
[k]_{i}!$, $f_{i}^{(k)} = f_{i}^{k} / [k]_{i}!$ for $i \in
I^{\text{re}}$.

\vskip 3mm

The algebra $\widehat{U}$ is endowed with the co-multiplication
$\Delta: \widehat{U} \rightarrow \widehat{U} \otimes \widehat{U}$
given by
\begin{equation} \label{eq:comult}
\begin{aligned}
& \Delta(K_{i}) = K_{i} \otimes K_{i}, \\
& \Delta(e_{il}) = \sum_{m+n=l} v_{(i)}^{mn}\, e_{im}\, K_{i}^n
\otimes
e_{in}, \\
& \Delta(f_{il}) = \sum_{m+n=l} v_{(i)}^{-mn} f_{im} \otimes
K_{i}^{-m} f_{in}.
\end{aligned}
\end{equation}

\vskip 2mm \noindent
 We will use Sweedler's notation to write
 $$\Delta(x)=\sum x_{(1)} \otimes x_{(2)} \ \ \text{for} \ \ x \in
 \widehat{U}.$$

 \vskip 3mm

 Let ${\widehat U}^{+}$ be the subalgebra of $\widehat{U}$ generated
 by $e_{il}$'s $((i,l) \in I^{\infty})$.

 \vskip 3mm

 \begin{proposition} \cite{Bozec2014b, Lus00, Ringel97} \
 {\rm

(a) If $i \in I^{\text{re}}$, $i \neq (j,l)$, then the elements
$$\sum_{k=0}^{1- l a_{ij}} (-1)^k e_{i}^{(k)} e_{jl} e_{i}^{(1- l
a_{ij} -l)}$$ lie in the radical of $( \ , \ )_{L}$.

\vskip 2mm

(b) If $a_{ij}=0$, then the elements $[e_{ik}, e_{jl}]$ $(k, l \ge
1)$ lie in the radical of $( \ , \ )_{L}$. }
\end{proposition}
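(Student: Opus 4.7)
The driving principle for both parts is the combination of axiom (c)(iv) with the grading: since $(x, w)_L$ vanishes unless $|x| = |w|$, and since $(x, yz)_L = (\delta(x), y \otimes z)_L$, to establish that a homogeneous $x$ of positive degree lies in $\operatorname{rad}(\ ,\ )_L$ it suffices to show that $\delta(x) \equiv 1 \otimes x + x \otimes 1$ modulo $\operatorname{rad}(\ ,\ )_L \otimes \mathscr{E} + \mathscr{E} \otimes \operatorname{rad}(\ ,\ )_L$, together with the direct check that $(x, u)_L = 0$ for every single generator $u$ of degree $|x|$. I run this argument inductively on $\operatorname{ht}(|x|)$.

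\textbf{Proof of (b).} Let $C = [e_{ik}, e_{jl}]$ with $a_{ij} = 0$, so $(\alpha_i, \alpha_j) = 0$ and every twist factor $v^{(b\alpha_i, c\alpha_j)}$ produced by \eqref{eq:twisted} collapses to $1$. Expanding $\delta(e_{ik})\delta(e_{jl})$ and $\delta(e_{jl})\delta(e_{ik})$ via \eqref{eq:co-mult} and subtracting, with the convention $e_{i0} = e_{j0} = \mathbf{1}$,
\[
\delta(C) = 1 \otimes C + C \otimes 1 + \sum v_{(i)}^{ab} v_{(j)}^{cd} \bigl([e_{ia}, e_{jc}] \otimes e_{ib} e_{jd} + e_{jc} e_{ia} \otimes [e_{ib}, e_{jd}]\bigr),
\]
the sum running over $a+b = k$, $c+d = l$ with the two boundary cases $(a,c) = (0,0)$ and $(b,d) = (0,0)$ removed. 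Every inner commutator that does not vanish identically has strictly smaller height, hence lies in $\operatorname{rad}(\ ,\ )_L$ by induction. A single-generator pairing can only occur when $i = j \in I^{\mathrm{iso}}$; in that case, pairing $C$ with $e_{i,k+l}$ via $(\delta(e_{i,k+l}), e_{ik} \otimes e_{il})_L$ and exploiting $v_{(i)} = 1$ shows that the two contributions of the commutator cancel symmetrically.

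\textbf{Proof of (a) and the main obstacle.} For $i \in I^{\mathrm{re}}$, the generator $e_i = e_{i,1}$ is primitive under $\delta$, and an iterated application of \eqref{eq:twisted} yields the standard divided-power formula $\delta(e_i^{(k)}) = \sum_{p+q=k} v_i^{pq}\, e_i^{(p)} \otimes e_i^{(q)}$. Write $N = 1 - la_{ij}$ and $S = \sum_{k=0}^N (-1)^k e_i^{(k)} e_{jl} e_i^{(N-k)}$. Expanding $\delta(S)$ and grouping by tensor bidegree, each nonprimitive tensor component of the form $e_i^{(p)} e_{jm} e_i^{(r)} \otimes e_i^{(q)} e_{j,l-m} e_i^{(s)}$ acquires a coefficient which is an alternating $k$-sum in $v_i$ with a quadratic Gauss-type exponent. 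A weighted Gauss alternating-sum identity annihilates this coefficient whenever both $e_{jm}$ and $e_{j,l-m}$ are genuinely present, i.e.\ $m, l-m \geq 1$. What remains are the primitive boundary $1 \otimes S + S \otimes 1$ and terms of the shape $(\text{boundary}) \otimes S_{i,j,l'}$ with $l' < l$ (and the symmetric shape), which lie in the radical by the nested induction on $l$. Since $i \neq (j,l)$, the degree $N\alpha_i + l\alpha_j$ is not a positive multiple of any single simple root, so no single-generator pairing remains to be checked. The real difficulty is the $q$-binomial bookkeeping: one must verify that the quadratic $v_i$-exponents produced by the successive twists \eqref{eq:twisted} match precisely the argument of the Gauss identity at every stage, and this requires a careful double induction on $l$ and on $N$.
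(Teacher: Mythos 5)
The paper does not give its own proof of this Proposition; it is cited to Bozec, Lusztig, and Ringel, so there is no internal argument to compare against. Your overall strategy is the standard and correct one: reduce to showing the defining element is primitive modulo $\operatorname{rad}(\ ,\ )_L \otimes \mathscr{E} + \mathscr{E} \otimes \operatorname{rad}(\ ,\ )_L$, invoke the adjunction $(x,yz)_L = (\delta(x), y\otimes z)_L$ for decomposables, and do a direct single-generator check in the same degree. Your treatment of part (b) is essentially complete: the expansion of $\delta([e_{ik},e_{jl}])$, the collapse of all twist factors because $(\alpha_i,\alpha_j)=0$, the induction on $k+l$, and the single-generator check in the case $i=j\in I^{\mathrm{iso}}$ (which is the only case where $k\alpha_i+l\alpha_j$ is a multiple of one simple root) all go through. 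Note the final cancellation there needs only $v_{(i)}^{kl}=v_{(i)}^{lk}$, so the appeal to $v_{(i)}=1$ is superfluous, but not wrong.

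For part (a) there is a genuine gap and a structural misstatement. You explicitly defer the $q$-binomial verification as ``the real difficulty,'' but that verification \emph{is} the proof; without confirming that the quadratic $v_i$-exponents produced by \eqref{eq:twisted} land in the window where the Gaussian alternating identity $\sum_k (-1)^k v^{dk}{m\brack k}=0$ applies, nothing has been established. Furthermore, the description of the surviving boundary terms as ``$(\text{boundary}) \otimes S_{i,j,l'}$ with $l'<l$'' is not correct. When $m=0$ (resp.\ $n=0$) in the expansion of $\delta(e_{jl}) = \sum_{m+n=l} v_{(j)}^{mn} e_{jm}\otimes e_{jn}$, the factor $e_{jl}$ survives with its full index $l$ in one leg; what shrinks is the $\alpha_i$-content $q+s = N-(p+r) < N$ on that leg. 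These boundary terms have the shape $e_i^{(p+r)} \otimes (\text{an alternating sum of } e_i^{(q)}e_{jl}e_i^{(s)} \text{ over a shortened range})$, i.e.\ a truncated Serre-type expression in the \emph{same} $l$ with lower $\alpha_i$-degree, not a Serre element with smaller second index. So the nested induction on $l$ you propose is aimed at the wrong target; showing these boundary contributions vanish is again a Gaussian-alternating-sum computation (of precisely the kind the paper carries out explicitly in its proof of Proposition \ref{prop:Serre-comp}), and you would need to check the exponent bounds there too.
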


\vskip 2mm \noindent Hence the bilinear form $( \ , \ )_{L}$ is
well-defined on $\widehat{U}^{+}$.

\vskip 3mm

Let ${\widehat U}^{\ge 0}$ be the subalgebra of $\widehat{U}$
generated by $\widehat{U}^{+}$ and $K_{i}^{\pm 1}$ $(i \in I)$. We
extend the bilinear form $( \ , \ )_{L}$ to $\widehat{U}^{\ge 0}$
via
\begin{equation} \label{eq:bilinearU+}
(x K_{i}, y K_{j})_{L} = v_{i}^{a_{ij}} (x, y)_{L}= v_{j}^{a_{ji}}
(x, y)_{L} \ \ \text{for all} \ x, y \in \widehat{U}^{+}, \ i, j \in
I.
\end{equation}

\vskip 3mm

Let $\omega: \widehat{U} \rightarrow \widehat{U}$ be the involution
defined by
\begin{equation} \label{eq:involution}
e_{il} \mapsto f_{il}, \quad f_{il} \mapsto e_{il}, \quad K_{i}
\mapsto K_{i}^{-1}.
\end{equation}

\noindent Then the subalgebra $\widehat{U}^{-}$ generated by
$f_{il}$'s $((i,l) \in I^{\infty})$ is endowed with a symmetric
bilinear form $( \ , \ )_{L}$ by setting
\begin{equation} \label{eq:bilinearU-}
(x, y)_{L} =(\omega(x), \omega(y))_{L} \ \ \text{for all} \ x, y \in
\widehat{U}^{-}.
\end{equation}

\vskip 3mm

Following the Drinfel'd double process, we take the algebra
$\widetilde{U}$ to be the quotient of $\widehat{U}$ by the relations
\begin{equation} \label{eq:Drinfeld}
\sum (a_{(1)}, b_{(2)})_{L} \, \omega(b_{(1)}) \, a_{(2)} = \sum
(a_{(2)}, b_{(1)})_{L} \, a_{(1)}\, \omega(b_{(2)}) \ \ \text{for
all} \ \  a, b \in \widehat{U}^{\ge 0}.
\end{equation}

\vskip 3mm

\begin{definition} \label{def:qBozec}
The {\it quantum Borcherds-Bozec algebra} $U_{v}(\g)$ associated
with the Borcherds-Cartan datum $(A, P, P^{\vee}, \Pi, \Pi^{\vee})$
is the quotient algebra of $\widetilde{U}$ by the radical of $(\ , \
)_{L}$ restricted to $\widetilde{U}^{-} \times \widetilde{U}^{+}$.
\end{definition}

\vskip 3mm

Thus we have $U^{\pm}_{v}(\g) = \widetilde{U}^{\pm} \big /
\text{rad} ( \ , \ )_{L}$, where $U^{+}_{v}(\g)$ (resp.
$U^{-}_{v}(\g)$) is the subalgebra of $U_{v}(\g)$ generated by
$e_{il}$'s (resp. $f_{il}$'s) for $(i,l) \in I^{\infty}$.

\vskip 3mm

From now on, we assume that
\begin{equation} \label{eq:assumption}
(e_{il}, e_{il})_{L} \in 1 + v^{-1} \Z_{\ge 0} [[v^{-1}]] \ \
\text{for all} \ i \in I^{\text{im}} \setminus I^{\text{iso}}, \, l
\ge 1.
\end{equation}
Then $( \ , \ )_{L}$ is non-degenerate on ${\mathscr
E}(i):=\bigoplus_{l \ge 1} {\mathscr E}_{l \alpha_i}$.

\vskip 3mm

\begin{proposition} \cite{Bozec2014b, Bozec2014c}
\label{prop:Bozec-a} \ {\rm For each $i \in I^{\text{im}}$ and $l
\ge 1$, there exists a unique element $s_{il} \in {\mathscr E}_{l
\alpha_i}$ such that

\begin{itemize}

\item[(i)]  $\langle s_{i,1}, \ldots, s_{i,l} \rangle = \langle
e_{i,1}, \ldots, e_{i,l} \rangle$ as algebras,

\vskip 2mm

\item[(ii)] $(s_{il}, z)_{L} =0$ for all $z \in \langle e_{i,1},
\ldots, e_{i, l-1} \rangle$,

\vskip 2mm

\item[(iii)]  $s_{il} - e_{il} \in \langle e_{i,1}, \ldots, e_{i, l-1}
\rangle$,

\vskip 2mm

\item[(iv)]  $\delta(s_{il}) = s_{il} \otimes 1 + 1 \otimes s_{il}$,

\vskip 2mm

\item[(v)]  $\Delta(s_{il}) = s_{il} \otimes 1 + K_{i}^l \otimes
s_{il}$.
\end{itemize}
}
\end{proposition}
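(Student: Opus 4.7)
The plan is to construct $s_{il}$ by induction on $l$ via a Gram--Schmidt orthogonalization inside the subalgebra ${\mathscr E}(i)=\bigoplus_{k\ge1}{\mathscr E}_{k\alpha_i}$, and then to extract the co-product identities (iv) and (v) from the orthogonality condition (ii) using the compatibility of $\delta$ and $\Delta$ with the bilinear form. The key technical input is the standing assumption \eqref{eq:assumption}, which guarantees that $(\ ,\ )_L$ is non-degenerate on each homogeneous piece ${\mathscr E}_{k\alpha_i}$.

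Set $s_{i,1}:=e_{i,1}$; assuming $s_{i,1},\ldots,s_{i,l-1}$ are already constructed, let $V_l:=\langle e_{i,1},\ldots,e_{i,l-1}\rangle\cap {\mathscr E}_{l\alpha_i}$. This is a finite-dimensional subspace of ${\mathscr E}_{l\alpha_i}$ on which the form is non-degenerate, so the orthogonal projection $\pi:{\mathscr E}_{l\alpha_i}\to V_l$ exists, and I would set
$$s_{il}:=e_{il}-\pi(e_{il}).$$
Property (iii) is built into the definition; the relation $e_{il}=s_{il}+\pi(e_{il})$ combined with induction gives (i); and (ii) follows from orthogonality to $V_l$ together with axiom (c)(i) of Definition 1.1, which kills contributions from other degrees. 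Uniqueness is immediate, since any two candidates differ by a vector in $V_l$ orthogonal to $V_l$, hence zero.

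The step I expect to be the main obstacle is (ii)$\Rightarrow$(iv). I would first extract a ``counity''-type identity from axiom (c)(iv): setting $b=\mathbf{1}$ (resp.\ $c=\mathbf{1}$) in $(a,bc)_L=(\delta(a),b\otimes c)_L$ and noting that $(\mathbf{1},\cdot)_L$ annihilates positive-degree homogeneous elements, non-degeneracy of $(\ ,\ )_L$ on ${\mathscr E}_{l\alpha_i}$ forces
$$\delta(a)=a\otimes\mathbf{1}+\mathbf{1}\otimes a+T(a),\qquad T(a)\in\bigoplus_{\substack{m+n=l\\ m,n\ge1}}{\mathscr E}_{m\alpha_i}\otimes {\mathscr E}_{n\alpha_i},$$
for every $a\in {\mathscr E}_{l\alpha_i}$. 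Now for any $x\in {\mathscr E}_{m\alpha_i}$ and $y\in {\mathscr E}_{n\alpha_i}$ with $m,n\ge1$ and $m+n=l$, the product $xy$ lies in $\langle e_{i,1},\ldots,e_{i,l-1}\rangle$, so (ii) yields $(s_{il},xy)_L=0$. Expanding via $\delta(s_{il})$ and using $(\mathbf{1},x)_L=(\mathbf{1},y)_L=0$ to kill the outer terms gives $(T(s_{il}),x\otimes y)_L=0$ for all such $x,y$; non-degeneracy of the tensor form on each bi-graded piece then forces $T(s_{il})=0$, which is (iv).

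For (v), I would invoke the standard dictionary between $\delta$ on the free algebra ${\mathscr E}$ and $\Delta$ on $\widehat U^+$. Comparing $\Delta(e_{il})=\sum_{m+n=l}v_{(i)}^{mn}e_{im}K_i^n\otimes e_{in}$ with $\delta(e_{il})=\sum_{m+n=l}v_{(i)}^{mn}e_{im}\otimes e_{in}$ on generators, and verifying compatibility of both co-products with their respective (twisted) multiplications, one shows by induction on degree that whenever $\delta(x)=\sum x_{(1)}\otimes x_{(2)}$ with $x_{(2)}\in {\mathscr E}_{n\alpha_i}$, the corresponding summand of $\Delta(x)$ equals $x_{(1)}K_i^n\otimes x_{(2)}$. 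Applied to $s_{il}$, for which $\delta(s_{il})=s_{il}\otimes\mathbf{1}+\mathbf{1}\otimes s_{il}$, this inserts $K_i^0=\mathbf{1}$ in the first summand and $K_i^l$ in the second, yielding $\Delta(s_{il})=s_{il}\otimes\mathbf{1}+K_i^l\otimes s_{il}$.
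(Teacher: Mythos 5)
Your overall strategy --- Gram--Schmidt inside $\mathscr{E}(i)$ to get (ii) and (iii), then converting orthogonality to $V_l$ into $\delta$-primitivity (iv) via the Hopf-pairing axiom (c)(iv) together with non-degeneracy, and finally deducing (v) from (iv) by the $\delta$/$\Delta$ dictionary --- is the right one, and the arguments you give for (i), for (ii)$\Rightarrow$(iv), and for (iv)$\Rightarrow$(v) are all sound.

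There is, however, a genuine gap at the very first step: you assert that $(\ ,\ )_L$ is non-degenerate on the hyperplane $V_l := \langle e_{i,1}, \ldots, e_{i,l-1}\rangle \cap \mathscr{E}_{l\alpha_i}$, but this does \emph{not} follow from non-degeneracy on $\mathscr{E}_{l\alpha_i}$. Over $\C(v)$ a non-degenerate symmetric form can perfectly well restrict to a degenerate form on a hyperplane (a null line in a hyperbolic plane is the standard example). In fact non-degeneracy on $V_l$ is \emph{equivalent} to the existence of $s_{il}$ satisfying (ii) and (iii): since $V_l$ has codimension one and the ambient form is non-degenerate, $\dim V_l^{\perp}=1$, so either $V_l^{\perp}\cap V_l = 0$ and the projection exists, or $V_l^{\perp}\subset V_l$ and no element congruent to $e_{il}$ modulo $V_l$ can lie in $V_l^{\perp}$. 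Asserting non-degeneracy on $V_l$ is therefore asserting the entire existence claim rather than proving it. To close the gap one must actually invoke the positivity hypothesis \eqref{eq:assumption} (for instance through Theorem \ref{thm:Green95}, showing that the Gram matrix of $\mathscr{E}_{l\alpha_i}$ in the monomial basis has a non-vanishing leading coefficient as $v\to\infty$, hence is non-degenerate on every coordinate subspace, in particular on $V_l$), and one must treat separately the isotropic case $i\in I^{\text{iso}}$, where \eqref{eq:assumption} is not imposed and even non-degeneracy on $\mathscr{E}(i)$ itself requires its own argument.
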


\vskip 3mm

\begin{proposition} \cite{Bozec2014b, Bozec2014c} \label{prop:Bozec-b} \
{\rm $U^{\pm}_{v}(\g) = \widetilde{U}^{\pm}$. In particular, $( \ ,
\ )_{L}$ is non-degenerate on $U^{\pm}_{v}(\g)$.}
\end{proposition}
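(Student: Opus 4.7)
The plan is to realize $\widetilde{U}^+$ as a Green-Lusztig algebra in the class $\mathscr{L}(I^\infty,(\,,\,),\C(v),v)$ and then apply Theorem \ref{thm:Green95} and Lemma \ref{lem:Green95} to control the radical of $(\,,\,)_L$. The case of $\widetilde{U}^-$ will then follow at once by transport of structure along the involution $\omega$ via \eqref{eq:bilinearU-}, so I focus on $\widetilde{U}^+$.

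The crucial move is to replace the generators $e_{il}$ for $i\in I^{\text{im}}$ by the primitive elements $s_{il}$ supplied by Proposition \ref{prop:Bozec-a}, keeping $e_i$ for $i\in I^{\text{re}}$ (which is already primitive for $\delta$). Property (i) of Proposition \ref{prop:Bozec-a} guarantees that these still generate $\widetilde{U}^+$, and property (iv) is precisely the coproduct condition (b)(i) of a Green-Lusztig algebra. The one non-trivial axiom to check is (c)(iii), namely $(s_{il},s_{il})_L\neq 0$. Under the assumption \eqref{eq:assumption}, the form is non-degenerate on each $\mathscr{E}(i)=\bigoplus_{l\ge 1}\mathscr{E}_{l\alpha_i}$; combining this with properties (ii) and (iii) of Proposition \ref{prop:Bozec-a}, the element $s_{il}$ is, up to scalar, a non-zero vector in the one-dimensional orthogonal complement of $\langle e_{i,1},\ldots,e_{i,l-1}\rangle$ inside $\langle e_{i,1},\ldots,e_{i,l}\rangle$, on which the restriction of the form is necessarily non-zero. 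All remaining Green-Lusztig axioms are inherited from Proposition \ref{prop:bozec2014b}.

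Once $\widetilde{U}^+$ sits in the Green-Lusztig class, Theorem \ref{thm:Green95} yields $(u_w,u_{w'})_L = P_{w,w'}(v)\,B_\beta(\widetilde{U}^+)$ with $B_\beta(\widetilde{U}^+)\neq 0$, and Lemma \ref{lem:Green95} identifies $\text{rad}(\,,\,)_L$ in weight $\beta$ with the solution space of the universal linear system $\sum_w c_w P_{w,w'}(v)=0$. The main obstacle is the final step: arguing that this universal solution space is already annihilated inside $\widetilde{U}^+$, i.e.\ that the defining relations \eqref{eq:rels} (Serre-type for real $i$ and commutativity when $a_{ij}=0$) already encode every universal dependency among weight-$\beta$ monomials in the $s$'s and $e_i$'s (the Drinfel'd-type relations \eqref{eq:Drinfeld} do not further modify $\widetilde{U}^+$ since they are mixed between $\widehat{U}^{\ge 0}$ and $\omega(\widehat{U}^{\ge 0})$). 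This is where Proposition \ref{prop:Bozec-b} links back to the body of the paper: the Ringel-Hall/composition algebra construction produces an explicit Green-Lusztig algebra in which the monomials $u_w$ correspond to actual representation-theoretic classes whose linear independence is dictated by the Hall basis, and this rigidity forces the radical to vanish on $\widetilde{U}^+$, yielding $\widetilde{U}^\pm=U^\pm_v(\g)$ and the non-degeneracy of $(\,,\,)_L$.
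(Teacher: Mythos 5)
The paper itself does not prove Proposition \ref{prop:Bozec-b}; it is cited wholesale from \cite{Bozec2014b, Bozec2014c}, where the non-degeneracy of $( \ , \ )_{L}$ on $\widetilde{U}^{\pm}$ is established geometrically, via a canonical basis coming from simple perverse sheaves on representation varieties of the quiver, with respect to which the pairing is manifestly non-degenerate. Your setup is sound: exchanging $e_{il}$ for the primitive generators $s_{il}$ of Proposition \ref{prop:Bozec-a} and verifying $(s_{il},s_{il})_{L}\neq 0$ under assumption \eqref{eq:assumption} does exhibit $\widetilde{U}^{+}$ as a Green-Lusztig algebra, so Theorem \ref{thm:Green95} and Lemma \ref{lem:Green95} identify $\text{rad}\, ( \ , \ )_{L}$ in each weight $\beta$ with the solution space of the universal linear system $\sum_{w}c_{w}P_{w,w'}(v)=0$.

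However, that characterization is equally valid for the degenerate Green-Lusztig algebra $\mathscr{E}$, so by itself it says nothing about whether the defining relations \eqref{eq:rels} already kill these combinations $\sum_{w}c_{w}s_{w}$ inside $\widetilde{U}^{+}$ --- and this is exactly the point you flag as the main obstacle and then do not actually resolve. The closing appeal to the Ringel-Hall construction is not available here: in the paper the logical flow runs the opposite way, since the injectivity of $\Phi$ in Theorem \ref{thm:main} is deduced \emph{from} Proposition \ref{prop:Bozec-b}. Corollary \ref{cor:Serre-comp} gives only a surjection $\widetilde{U}^{+}\twoheadrightarrow \C(v)\otimes_{\C[v,v^{-1}]}C(Q)$; for the rigidity of the Hall basis to force the radical on $\widetilde{U}^{+}$ to vanish you would need either that this surjection is injective (which is circular) or an independent proof that $( \ , \ )_{G}$ is non-degenerate on $C(Q)$ together with injectivity of $\Phi$, and you supply neither. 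The missing ingredient is an input external to the Hall-algebra formalism --- in Bozec's work, the perverse-sheaf construction --- identifying $\widetilde{U}^{+}$ with the minimal, radical-free Green-Lusztig algebra in its class.
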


\vskip 3mm

Combining Proposition \ref{prop:Bozec-a} and Proposition
\ref{prop:Bozec-b}, we obtain

\vskip 2mm

\begin{corollary} \label{cor:U+} \
{\rm  The algebra $U^{+}_{v}(\g)$ is a non-degenerate Green-Lusztig
algebra belonging to the class ${\mathscr L}(I^{\infty}, ( \ , \
)_{L}, \C(v), v)$. }
\end{corollary}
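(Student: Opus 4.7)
The plan is to realize $U^+_v(\g)$ as a Green-Lusztig algebra in the class $\mathscr{L}(I^\infty, (\,,\,)_L, \C(v), v)$ by choosing as generators $u_x$ the primitive elements of Proposition \ref{prop:Bozec-a} rather than the original $e_{il}$. The naive choice $u_{(i,l)} = e_{il}$ would violate axiom (b)(i) as soon as $l \geq 2$, because \eqref{eq:co-mult} contains nontrivial intermediate terms $e_{im} \otimes e_{in}$; the elements $s_{il}$ are precisely the fix, being $\delta$-primitive by Proposition \ref{prop:Bozec-a}(iv). I therefore set
\[
u_{(i,l)} := \begin{cases} e_i & \text{if } i \in I^{\text{re}} \ \text{(so } l = 1 \text{)},\\ s_{il} & \text{if } i \in I^{\text{im}},\ l \geq 1, \end{cases}
\]
with $u_{(i,l)}$ homogeneous of $\Lambda^+$-degree $\alpha_{(i,l)}$.

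Axiom (a) follows at once: Proposition \ref{prop:Bozec-a}(i) gives generation of $U^+_v(\g)$ by these $u_{(i,l)}$, the degree-zero component is $\C(v) \mathbf{1}$ by construction, and the $\Lambda^+$-grading is pushed down from ordered monomials in the $u_{(i,l)}$, which form a PBW-type basis of $U^+_v(\g)$. For axiom (b), the map $\delta$ of \eqref{eq:co-mult} is by construction an algebra homomorphism on $\mathscr{E}$ for the twisted product \eqref{eq:twisted}; it descends to $U^+_v(\g)$ via the adjointness $(\delta(a), b \otimes c)_L = (a, bc)_L$ from Proposition \ref{prop:bozec2014b}(d), since $a$ in the radical of $(\,,\,)_L$ annihilates every product $bc$, forcing $\delta(a) = 0$ in $U^+_v(\g) \otimes U^+_v(\g)$ by Proposition \ref{prop:Bozec-b}. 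Condition (b)(i) is exactly Proposition \ref{prop:Bozec-a}(iv) for imaginary $i$; for real $i$ it reduces to \eqref{eq:co-mult} at $l = 1$ with $e_{i,0} := \mathbf{1}$.

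Axiom (c) parts (i), (ii), (iv) are direct restatements of Proposition \ref{prop:bozec2014b}. The substantive step is (c)(iii), $(u_x, u_x)_L \neq 0$. For $x = i \in I^{\text{re}}$ this is immediate. For $x = (i, l)$ with $i \in I^{\text{im}}$, an arbitrary $y \in \mathscr{E}_{l\alpha_i}$ is spanned by monomials $e_{i,l_1}\cdots e_{i,l_r}$ with $l_1+\cdots+l_r = l$, which either equal $e_{il}$ (when $r=1$) or lie in $\langle e_{i,1}, \dots, e_{i,l-1} \rangle$; hence $y = c\, e_{il} + z$ with $z \in \langle e_{i,1}, \dots, e_{i,l-1}\rangle$. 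Proposition \ref{prop:Bozec-a}(ii) kills the $z$-term, while (iii) yields $(s_{il}, e_{il})_L = (s_{il}, s_{il})_L$, so $(s_{il}, y)_L = c \, (s_{il}, s_{il})_L$. Therefore $(s_{il}, s_{il})_L = 0$ would force $s_{il}$ to be orthogonal to all of $\mathscr{E}_{l\alpha_i}$, and by graded orthogonality to all of $\mathscr{E}(i)$; but $s_{il}$ is a nonzero element of $\mathscr{E}(i)$ (since $e_{il}$ is a free generator of $\mathscr{E}$), contradicting the non-degeneracy of $(\,,\,)_L$ on $\mathscr{E}(i)$ secured by \eqref{eq:assumption}.

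Finally, the non-degeneracy of $(\,,\,)_L$ on $U^+_v(\g)$ is exactly Proposition \ref{prop:Bozec-b}. I expect the main obstacle to be (c)(iii): controlling $(s_{il}, s_{il})_L$ via Proposition \ref{prop:Bozec-a} and the non-degeneracy on $\mathscr{E}(i)$ requires the careful bookkeeping above, whereas the remaining axioms follow formally once the correct generators have been identified as the primitive elements $s_{il}$.
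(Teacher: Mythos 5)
Your proof is correct and takes essentially the same approach as the paper: both recognize the primitive elements $s_{il}$ of Proposition \ref{prop:Bozec-a} as the correct generators (so that axiom (b)(i) holds), and both derive $(s_{il}, s_{il})_L = (s_{il}, e_{il})_L \neq 0$ from non-degeneracy of $(\,,\,)_L$ on $\mathscr{E}(i)$. You simply spell out the supporting details (the decomposition $y = c\,e_{il} + z$ in $\mathscr{E}_{l\alpha_i}$, the descent of $\delta$ to the quotient) that the paper's terse proof leaves implicit.
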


\begin{proof} \ Note that $U^{+}_{v}(\g)$ is generated by $s_{il}$
and that $\delta(s_{il})= s_{il} \otimes 1 + 1 \otimes s_{il}$ for
$(i,l) \in I^{\infty}$. Since $( \ , \ )_{L}$ is non-degenerate on
${\mathscr E}(i)$ for each $i \in I^{\text{im}}$, we have
$$(s_{il}, s_{il})_{L} = (s_{il}, e_{il})_{L} \neq 0,$$ which proves
our claim.
\end{proof}

\vskip 2mm

\noindent {\it Remark}. The algebra ${\mathscr E}$ is also a
(degenerate) Green-Lusztig algebra belonging to the class ${\mathscr
L}(I^{\infty}, ( \ , \ )_{L}, \C(v), v)$.

\vskip 8mm

\section{Ringel-Hall algebras}

\vskip 2mm

Let $I$ be an index set (possibly countably infinite) and let
$R=\bigoplus_{i \in I} \Z \alpha_i$ be the free abelian group on
$I$. Let $Q=(I, \Omega)$ be a quiver, where $I$ is the set of
vertices and $\Omega$ is the set of arrows. We have the functions
$\text{out}, \text{in}: \Omega \rightarrow I$ defined by
$$\text{out}(h) \overset{h} \longrightarrow \text{in}(h) \ \ \text{for}
\ \ h \in \Omega.$$

\vskip 2mm

\begin{definition} \label{def:repn} \ {\rm
Let $\mathbf{k}$ be a field and let $Q=(I, \Omega)$ be a quiver. A
{\it representation of $Q$ over $\mathbf{k}$} consists of

\begin{itemize}
\item[(i)] a family of finite dimensional $\mathbf{k}$-vector spaces
$M=(M_{i})_{i \in I}$ such that $M_{i}=0$ for all but finitely many
$i$,

\item[(ii)] a family of $\mathbf{k}$-linear maps $x=(x_{h}:
M_{\text{out}(h)} \rightarrow M_{\text{in}(h)})_{h \in \Omega}$.
\end{itemize}
}
\end{definition}

\vskip 2mm

For simplicity, we often write $(M,x)$ for a representation of $Q$.

\vskip 3mm

\begin{definition} \label{def:morphism} \
{\rm Let $(M,x)$ and $(N,y)$ be representations of a quiver $Q=(I,
\Omega)$. A {\it morphism} $\phi:(M,x) \rightarrow (N,y)$ is a
family of $\mathbf{k}$-linear maps $\phi=(\phi_i: M_i \rightarrow
N_i)_{i \in I}$ such that, for all $h \in \Omega$, the following
diagram is commutative.
\begin{equation} \label{diagram:morphism}
\xymatrix{ M_{\text{out}(h)} \ar[d]^-{x_{h}}
\ar[r]^-{\phi_{\text{out(h)}}}&
N_{\text{out}(h)} \ar[d]^-{y_{h}} \\
M_{\text{in}(h)}  \ar[r]^-{\phi_{\text{in}(h)}} & N_{\text{in}(h)}}
\end{equation}

}
\end{definition}

\vskip 3mm

Let $M=(M_{i})_{i \in I}$ be a representation of $Q$. We define the
{\it dimension vector} of $M$ by
\begin{equation}
\underline{\text{dim}}\, M = \sum_{i \in I} (\text{dim}_{\mathbf{k}}
M_{i})\, \alpha_i \in R^{+}.
\end{equation}

\vskip 2mm

Let $M$ and $N$ be representations of $Q$. The (non-symmetric) {\it
Euler form} of $M$ and $N$ is defined by
\begin{equation} \label{eq:euler-a}
\langle M, N \rangle = \dim_{\mathbf{k}} Hom_{\mathbf{k} Q} (M,N) -
\dim_{\mathbf{k}} Ext_{\mathbf{k} Q}^{1} (M,N).
\end{equation}

\vskip 2mm

On the other hand, for $\alpha = \sum_i d_i \alpha_i, \,
\beta=\sum_i d_i' \alpha_i \in R^{+}$, we define
\begin{equation} \label{eq:euler-b}
\langle \alpha, \beta \rangle = \sum_i (1 - g_i) d_i d_i' -
\sum_{\substack {i \neq j \\ i \rightarrow j}} c_{ij} d_{i} d_{j}',
\end{equation}
where $g_i$ is the number of loops at $i$ and $c_{ij}$ denotes the
number of arrows from $i$ to $j$ in $\Omega$.

\vskip 3mm

The following lemma is well-known (see, for example, \cite{CB, Hu}).

\vskip 3mm

\begin{lemma} {\rm \ Let $M$ and $N$ be representations of $Q$. Then we have
$$\langle M,
N \rangle = \langle \underline{\dim} \, M, \underline{\dim} \, N
\rangle.$$
}
\end{lemma}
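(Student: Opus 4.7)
The plan is to invoke the standard (length $2$) projective resolution of quiver representations and compute the Euler form via the resulting alternating sum of dimensions; the argument does not interact with the loop versus non-loop distinction until the final bookkeeping step.

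First, I would write down the canonical short exact sequence of left $\mathbf{k}Q$-modules
$$0 \longrightarrow \bigoplus_{h \in \Omega} P_{\text{in}(h)} \otimes_{\mathbf{k}} M_{\text{out}(h)} \longrightarrow \bigoplus_{i \in I} P_i \otimes_{\mathbf{k}} M_i \longrightarrow M \longrightarrow 0,$$
where $P_i = (\mathbf{k}Q)\,e_i$ is the indecomposable projective at the vertex $i$ and the differential is built from the arrow action. Since the path algebra of a quiver without relations is hereditary (this remains true in the presence of loops, where $\mathbf{k}Q$ is merely infinite-dimensional), this is a genuine projective resolution of $M$.

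Next, I would apply $\Hom_{\mathbf{k}Q}(-, N)$ and use the standard identifications $\Hom_{\mathbf{k}Q}(P_i \otimes_{\mathbf{k}} V, N) \cong \Hom_{\mathbf{k}}(V, N_i)$ to obtain the four-term exact sequence
$$0 \to \Hom_{\mathbf{k}Q}(M,N) \to \bigoplus_{i \in I} \Hom_{\mathbf{k}}(M_i, N_i) \to \bigoplus_{h \in \Omega} \Hom_{\mathbf{k}}(M_{\text{out}(h)}, N_{\text{in}(h)}) \to \text{Ext}^{1}_{\mathbf{k}Q}(M,N) \to 0.$$
Writing $\underline{\dim}\,M = \sum_i d_i \alpha_i$ and $\underline{\dim}\,N = \sum_i d_i' \alpha_i$ and taking the alternating sum of $\mathbf{k}$-dimensions yields
$$\langle M, N \rangle \;=\; \sum_{i \in I} d_i d_i' \;-\; \sum_{h \in \Omega} d_{\text{out}(h)}\, d_{\text{in}(h)}'.$$

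Finally, I would split the arrow sum according to whether $h$ is a loop: at each vertex $i$ the $g_i$ loops contribute a total of $g_i\, d_i d_i'$, while for $i \neq j$ each of the $c_{ij}$ arrows $i \to j$ contributes $d_i d_j'$. Regrouping with the diagonal term gives
$$\langle M, N \rangle \;=\; \sum_{i \in I} (1 - g_i)\, d_i d_i' \;-\; \sum_{\substack{i \neq j \\ i \to j}} c_{ij}\, d_i d_j',$$
which is precisely $\langle \underline{\dim}\,M, \underline{\dim}\,N \rangle$ by \eqref{eq:euler-b}. The only step that really requires care is checking that the canonical resolution remains exact when $Q$ has loops; this is where the hereditariness of $\mathbf{k}Q$ is used, and it is the principal input of the proof. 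Everything after that is linear-algebra bookkeeping.
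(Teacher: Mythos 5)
The paper does not prove this lemma at all; it simply records it as ``well-known'' and refers the reader to Crawley-Boevey's and Hubery's lecture notes. Your proof via the canonical two-term projective resolution
$0 \to \bigoplus_{h} P_{\text{in}(h)} \otimes_{\mathbf k} M_{\text{out}(h)} \to \bigoplus_{i} P_i \otimes_{\mathbf k} M_i \to M \to 0$,
followed by $\Hom_{\mathbf{k}Q}(-,N)$ and an alternating sum of dimensions, is exactly the argument those references use, so you have supplied the standard proof rather than a different route. The computation is correct, including the separation of loop and non-loop arrows at the end, which reproduces the coefficients $1-g_i$ and $-c_{ij}$ of \eqref{eq:euler-b}. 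One small sharpening: what you actually need is exactness of the canonical resolution for quivers with loops (which is what makes $\mathbf{k}Q$ hereditary, not the other way around); this holds for any quiver without relations, loops or not, so the point is sound even if the causality in your last sentence is stated backwards.
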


\vskip 3mm

For $\alpha, \beta \in R^{+}$, we define
\begin{equation} \label{eq:Euler-b}
(\alpha, \beta) := \langle \alpha, \beta \rangle + \langle \beta,
\alpha \rangle.
\end{equation}
In particular, we have
\begin{equation}\label{eq:Cartan}
(\alpha_i, \alpha_j) = \begin{cases} 2(1-g_i) \ \ & \text{if} \ \
i=j, \\
-c_{ij} -c_{ji} \ \ & \text{if} \ \ i \neq j.
\end{cases}
\end{equation}

\vskip 2mm

Hence we obtain a symmetric Borcherds-Cartan matrix $A_{Q}=
(a_{ij})_{i,j \in I} = ((\alpha_i, \alpha_j))_{i,j \in I}$ with $R$
as the root lattice. We will denote by $U_{v}(\g_{Q})$ the quantum
Borcherds-Bozec algebra associated with $A_{Q}$.

\vskip 3mm

Let $\mathbf{k}$ be a finite field with $q$ elements and choose a
complex number $v=v_{\mathbf{k}} \in \C$ such that $v^2 = q$. Then
$(I, (\ , \ ), \C, v)$ is a Green-Lusztig datum.

\vskip 3mm

\begin{definition} \ {\rm
The {\it Ringel-Hall algebra} $H_{\mathbf{k}}(Q)$ is the associative
algebra over $\C$ with a basis consisting of isomorphism classes of
representations of $Q$ endowed with the multiplication defined by
\begin{equation} \label{eq:mult}
[M] \, [N] := \sum_{L} v^{\langle \dim M,\, \dim N \rangle}
\alpha_{M,N}^{L} \, [L],
\end{equation}
where $[M]$ denotes the isomorphism class of $M$ and
\begin{equation} \label{eq:coeff}
\alpha_{M,N}^{L} = \# \{X \subset L \mid X \cong N, \, L/X \cong M
\}.
\end{equation}
}
\end{definition}

\vskip 3mm

Let $\alpha \in R^{+}$ and let $H_{\mathbf{k}}(Q)_{\alpha}$ be the
$\C$-span of the isomorphism classes with $\underline{\dim} \, M =
\alpha$. Then $H_{\mathbf{k}}(Q) = \bigoplus_{\alpha \in R^{+}}
H_{\mathbf{k}}(Q)_{\alpha}$ becomes an $R^{+}$-graded algebra
(\cite{Green95, Ringel96}, etc).

\vskip 3mm

We define a {\it twisted} algebra structure on $H_{\mathbf{k}}(Q)
\otimes H_{\mathbf{k}}(Q)$ by
\begin{equation} \label{eq:twisted-Hall}
([M_1] \otimes [M_2]) \, ([N_1] \otimes [N_2]) =
v^{(\underline{\dim}\, M_2,\, \underline{\dim}\, N_1)} ([M_1]\,
[N_1] \otimes [M_2]\, [N_2])
\end{equation}
and a $\C$-linear map $\delta: H_{\mathbf{k}}(Q) \rightarrow
H_{\mathbf{k}}(Q) \otimes H_{\mathbf{k}}(Q)$ by
\begin{equation} \label{eq:comult-Hall}
\delta([L]) = \sum_{M,N} v^{\langle \underline{\dim}\, M, \,
\underline{\dim} \, N \rangle} \alpha_{M,N}^{L} \, \dfrac{a_{M}
a_{N}}{a_{L}} ([M] \otimes [N]),
\end{equation}
where $a_{M} = \#(Aut_{\mathbf{k}Q} (M))$.

\vskip 3mm

\begin{proposition} \cite{Green95} \label{prop:Green95} \hfill

\vskip 2mm

{\rm (a) $\delta: H_{\mathbf{k}}(Q) \rightarrow H_{\mathbf{k}}(Q)
\otimes H_{\mathbf{k}}(Q)$ is a $\C$-algebra homomorphism.

\vskip 2mm

(b) There exists a non-degenerate symmetric bilinear form $( \ , \
)_{G}: H_{\mathbf{k}}(Q) \times H_{\mathbf{k}}(Q) \rightarrow \C$
defined by
$$([M], [N])_{G} = \delta_{[M], [N]} \dfrac{1}{a_{M}}.$$

(c) We have
$$(x, yz)_{G} = (\delta(x), y \otimes z)_{G} \ \ \text{for all} \ \
x,y,z \in H_{\mathbf{k}}(Q).$$ }
\end{proposition}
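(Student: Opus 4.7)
The plan is to handle (b) first, where the assertion is almost immediate; then (c), which reduces to matching coefficients on basis elements; and finally (a), the substantive part, which rests on the classical Green formula.

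For (b), since the isomorphism classes $\{[M]\}$ form a $\C$-basis of $H_{\mathbf{k}}(Q)$, the formula $([M],[N])_G = \delta_{[M],[N]}/a_M$ unambiguously defines a bilinear form, and symmetry is automatic because the Kronecker delta forces $a_M = a_N$ whenever the value is nonzero. The Gram matrix in this basis is diagonal with nonzero entries $1/a_M$, hence the form is non-degenerate.

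For (c), it suffices to test the identity on basis elements $x = [L]$, $y = [M]$, $z = [N]$. The product $[M][N]$ contributes the coefficient $v^{\langle \underline{\dim}\,M,\,\underline{\dim}\,N\rangle}\alpha_{M,N}^L$ to $[L]$, giving
$$([L],[M][N])_G \;=\; \frac{v^{\langle \underline{\dim}\,M,\,\underline{\dim}\,N\rangle}\,\alpha_{M,N}^L}{a_L}.$$
On the other side, the only summand in $\delta([L])$ whose pairing against $[M]\otimes[N]$ survives is the $(M,N)$-term, and after cancelling $a_M a_N$ against the $1/(a_M a_N)$ coming from the form on the tensor product, one obtains the same value.

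For (a), I would expand $\delta([M_1][M_2])$ and $\delta([M_1])\delta([M_2])$ in the basis $\{[N_1]\otimes[N_2]\}$ of $H_{\mathbf{k}}(Q)\otimes H_{\mathbf{k}}(Q)$. The left side produces, as the coefficient of $[N_1]\otimes[N_2]$, a sum over $L$ built from $\alpha_{M_1,M_2}^L\,\alpha_{N_1,N_2}^L/a_L$ together with the obvious $v$-exponent. The right side, computed with the twisted multiplication \eqref{eq:twisted-Hall}, unfolds into a sum over quadruples $(X_{ij})_{1\le i,j\le 2}$ of representations, weighted by a product of four Hall numbers $\alpha_{X_{\cdot\cdot},X_{\cdot\cdot}}^{M_i}$ and $\alpha_{X_{\cdot\cdot},X_{\cdot\cdot}}^{N_j}$ divided by $a_{X_{11}}a_{X_{12}}a_{X_{21}}a_{X_{22}}$. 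After verifying that the $v$-exponents match on both sides, which follows from $(\alpha,\beta) = \langle\alpha,\beta\rangle + \langle\beta,\alpha\rangle$ applied to the twist $(|X_{21}|,|X_{12}|)$ together with the additive bilinearity of $\langle-,-\rangle$, equating coefficients reduces the whole statement to Green's formula
$$\sum_L \frac{\alpha_{M_1,M_2}^L\,\alpha_{N_1,N_2}^L}{a_L} \;=\; \sum_{(X_{ij})} \frac{\alpha_{X_{11},X_{21}}^{M_1}\,\alpha_{X_{12},X_{22}}^{M_2}\,\alpha_{X_{11},X_{12}}^{N_1}\,\alpha_{X_{21},X_{22}}^{N_2}}{a_{X_{11}}a_{X_{12}}a_{X_{21}}a_{X_{22}}}.$$

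The main obstacle is Green's formula itself. Both sides count filtrations of a common object by two independent chains of subobjects, and the equality is proved by a delicate double-counting argument (equivalently, by a long exact sequence computation) which crucially uses the hereditariness of $\mathbf{k}Q$, namely the vanishing of $\operatorname{Ext}^{\ge 2}$ in the category of representations of $Q$. Once this homological identity is established, the passage to (a) is pure bookkeeping with the twist exponents, and the non-degeneracy in (b) together with the adjointness (c) guarantees that no additional compatibility has to be checked by hand.
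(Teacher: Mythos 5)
The paper does not prove this proposition; it is quoted verbatim from Green \cite{Green95}, so there is no in-text argument to compare against. Your treatment of (b) and (c) is correct and routine. The problem is in (a), where you correctly identify Green's formula as the crux but misstate it, and the misstatement traces back to a concrete error in your exponent bookkeeping.

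You claim that after matching $v$-exponents via $(\alpha,\beta)=\langle\alpha,\beta\rangle+\langle\beta,\alpha\rangle$ applied to the twist $(\,|X_{12}|,|X_{21}|\,)$, the coefficient comparison reduces to a twist-free identity of Hall numbers. This is false. Writing $A=X_{11}$, $B=X_{12}$, $C=X_{21}$, $D=X_{22}$ and expanding both sides of $\delta([M_1][M_2])=\delta([M_1])\delta([M_2])$ against the basis $[N_1]\otimes[N_2]$, the left side carries the exponent $\langle \underline{\dim}\,M_1,\underline{\dim}\,M_2\rangle+\langle \underline{\dim}\,N_1,\underline{\dim}\,N_2\rangle$, while the right side carries $\langle A,B\rangle+\langle C,D\rangle+(B,C)+\langle A,C\rangle+\langle B,D\rangle$ (in the obvious dimension-vector shorthand). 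Using $M_1=A+B$, $M_2=C+D$, $N_1=A+C$, $N_2=B+D$ and bilinearity, the difference is exactly $2\langle A,D\rangle$, which does \emph{not} vanish. Hence the coefficient identity you actually need is the genuine Green formula
$$
a_{M_1}a_{M_2}a_{N_1}a_{N_2}\sum_{L}\frac{\alpha_{M_1,M_2}^{L}\,\alpha_{N_1,N_2}^{L}}{a_L}
=\sum_{A,B,C,D} q^{-\langle A,D\rangle}\,\alpha_{A,B}^{M_1}\alpha_{C,D}^{M_2}\alpha_{A,C}^{N_1}\alpha_{B,D}^{N_2}\,a_A a_B a_C a_D,
$$
with $q=v^2$. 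The $q^{-\langle A,D\rangle}$ factor is precisely the residual twist you claimed cancels; it is also where hereditariness enters, since $q^{-\langle A,D\rangle}=|\!\operatorname{Ext}^1(A,D)|/|\!\operatorname{Hom}(A,D)|$ requires $\operatorname{Ext}^{\ge 2}=0$. Your displayed formula also has the automorphism factors in the wrong place (they sit as $a_A a_B a_C a_D$ in the numerator on the right and $a_{M_1}a_{M_2}a_{N_1}a_{N_2}$ on the left, not as $1/(a_{X_{11}}a_{X_{12}}a_{X_{21}}a_{X_{22}})$). The overall strategy is sound, but as written the reduction is to a false identity; the bookkeeping must be redone to recover the twisted form above, and only then invoke Green's theorem.
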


\vskip 3mm

Let $(i,l) \in I^{\infty}$. If $i \in I^{\text{re}}$, we define
$E_{i}$ to be the unique simple representation of $Q$ with dimension
vector $\alpha_i$. By \eqref{eq:comult-Hall}, we see that
$$\delta([E_i]) = [E_i] \otimes 1 + 1 \otimes [E_i].$$

\vskip 3mm

Assume that $i \in I^{\text{im}}$. For each $l \ge 1$, we define a
representation $(E_{i,l}, x)$ of $Q$ by setting
\begin{equation} \label{eq:E_{il}}
\begin{aligned}
 (E_{i,l})_{j} &= \begin{cases} {\mathbf k}^{l} \ \ & \text{if} \ \
j=i, \\
0 \ \ & \text{if} \ \ j \neq i,
\end{cases} \\
x_{h} &= 0 \ \ \text{for all} \ h \in \Omega.
\end{aligned}
\end{equation}

\vskip 3mm

Note that
\begin{equation} \label{eq:aut}
\begin{aligned}
& a_{E_{i,l}} = \#(GL_{l}(\mathbf{k})) = (q^l -1) (q^{l} -q)
\cdots (q^{l}-q^{l-1}) = v^{\frac{3}{2} l (l-1)} (v^2 -1)^l \, [l]!, \\
& \alpha_{E_{i,m},\, E_{i,n}}^{E_{i,m+n}} = \#Gr_{\mathbf{k}}
\binom{m+n}{m} = v^{mn} {{m+n} \brack {m}}.
\end{aligned}
\end{equation}
Therefore we obtain
$$
\begin{aligned}
\delta([E_{i,l}]) &=\sum_{m+n=l} v^{\langle \underline{\dim}\,
E_{i,m},\, \underline{\dim}\, E_{i,n} \rangle} \alpha_{E_{i,m},\,
E_{i,n}}^{E_{i,m+n}} \, \dfrac{a_{E_{i,m}}
a_{E_{i,n}}}{a_{E_{i,m+n}}} \,
[E_{i,m}] \otimes [E_{i,n}] \\
&= \sum_{m+n=l} v^{mn \langle \alpha_i, \alpha_i \rangle} v^{mn} \,
{{m+n} \brack {m}} \, \dfrac{v^{\frac{3}{2}(m(m-1)+n(n-1))}(v^2
-1)^{m+n}\, [m]! \, [n]!}{v^{\frac{3}{2}((m+n)(m+n-1))} (v^2
-1)^{m+n} \,
[m+n]!} \\
&= \sum_{m+n=l} v^{mn(1-g_i-2)} \, [E_{i,m}] \otimes [E_{i,n}]\\
&= \sum_{m+n=l} v^{mn(-1-g_i)} \, [E_{i,m}] \otimes [E_{i,n}].
\end{aligned}
$$

\vskip 3mm

Hence for all $(i,l) \in I^{\infty}$, we have
\begin{equation} \label{eq:comult-Comp}
\delta([E_{i,l}]) = \sum_{m+n=l} v^{mn(-1-g_i)} [E_{i,m}] \otimes
[E_{i,n}].
\end{equation}
Moreover, by \eqref{eq:aut}, we see that
\begin{equation} \label{eq:norm_a}
([E_{i,l}], [E_{i,l}])_{G} = \dfrac{1}{a_{E_{i,l}}} \in v^{-2 {l^2}}
(1 + v^{-1} \Z [[v^{-1}]]) \quad \text{for all} \ \ (i,l) \in
I^{\infty}.
\end{equation}

\vskip 3mm

Set ${\mathbf{e}}_{il} := v^{l^2} [E_{i,l}] \in H_{\mathbf{k}}(Q)$.
Then by \eqref{eq:comult-Comp}, we obtain
\begin{equation} \label{eq:comult-comp}
\delta({\mathbf e}_{il}) = \sum_{m+n=l} v_{(i)}^{mn} \, {\mathbf
e}_{im} \otimes {\mathbf e}_{in},
\end{equation}
where $v_{(i)} = v^{\langle \alpha_i, \alpha_i \rangle} =
v^{1-g_i}$. Moreover, it is easy to see that
\begin{equation}
{\mathbf e}_{il}, {\mathbf e}_{il})_{G} \in 1 + v^{-1} \Z[[v^{-1}]]
\quad \text{for all} \ \ (i,l) \in I^{\infty}.
\end{equation}

\vskip 2mm

\begin{definition} \label{def:comp} \ {\rm  The subalgebra
$C_{\mathbf{k}}(Q)$ of $H_{\mathbf{k}}(Q)$ generated by ${\mathbf
e}_{i,l}$ $((i,l)\in I^{\infty})$ is called the {\it composition
algebra of $Q$ over $\mathbf{k}$}. }
\end{definition}

\vskip 2mm

By \eqref{eq:comult-comp}, we see that $\delta(C_{\mathbf{k}}(Q))
\subset C_{\mathbf{k}}(Q) \otimes_{\mathbf{k}} C_{\mathbf{k}}(Q)$
and hence $C_{\mathbf{k}}(Q)$ is a bi-algebra.

\vskip 3mm

For each $i \in I$, set $H_{\mathbf{k}}(i):=\bigoplus_{l \ge 1}
H_{\mathbf{k}}(Q)_{l \alpha_i}$. Then the restriction of $( \ , \
)_{G}$ to $H_{\mathbf{k}}(i)$ is non-degenerate. Hence, as in
\cite[Proposition 2.16]{Bozec2014b}, we have:

\vskip 3mm

\begin{proposition} \label{prop:s_{il}} \
{\rm For each $(i,l) \in I^{\infty}$, there exists a unique element
${\mathbf s}_{il} \in H_{\mathbf{k}}(Q)$ such that
\begin{itemize}
\item[(a)] $\langle {\mathbf s}_{i,1}, \ldots, {\mathbf s}_{i,l} \rangle = \langle
{\mathbf e}_{i,1}, \ldots , {\mathbf e}_{i,l} \rangle$ as algebras,

\vskip 2mm

\item[(b)] $({\mathbf s}_{il},\,  x)_{G}=0$ for all $x \in \langle {\mathbf e}_{i,1}, \ldots, {\mathbf e}_{i, l-1}
\rangle$,

\vskip 2mm

\item[(c)] ${\mathbf s}_{il} - {\mathbf e}_{il} \in \langle {\mathbf e}_{i,1}, \ldots,
{\mathbf e}_{i,l-1} \rangle$,

\vskip 2mm

\item[(d)] $\delta({\mathbf s}_{il}) = {\mathbf s}_{il} \otimes 1 + 1 \otimes
{\mathbf s}_{il}$.
\end{itemize}
}
\end{proposition}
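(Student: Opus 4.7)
The plan is to mimic the construction of $s_{il}$ in Proposition \ref{prop:Bozec-a} by induction on $l$, transferring each step to the Ringel--Hall setting via Proposition \ref{prop:Green95}. The base case $l=1$ is immediate: set ${\mathbf s}_{i,1} := {\mathbf e}_{i,1}$, so that (a)--(c) hold vacuously and (d) is the $l=1$ specialization of \eqref{eq:comult-comp}. For the inductive step, assume ${\mathbf s}_{i,1}, \ldots, {\mathbf s}_{i,l-1}$ have been constructed and satisfy (a)--(d). Write $V := \langle {\mathbf e}_{i,1}, \ldots, {\mathbf e}_{i,l-1}\rangle = \langle {\mathbf s}_{i,1}, \ldots, {\mathbf s}_{i,l-1}\rangle$ (using (a)), and $V_l := V \cap H_{\mathbf{k}}(Q)_{l\alpha_i}$. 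I seek ${\mathbf s}_{i,l}$ in the form ${\mathbf e}_{i,l} + u$ with $u \in V_l$; condition (c) is then automatic, condition (b) reduces by grading to the finite linear system $({\mathbf e}_{i,l} + u, w)_G = 0$ for $w$ ranging over a basis of $V_l$, and (a) follows from $V + \C \cdot {\mathbf s}_{i,l} = V + \C \cdot {\mathbf e}_{i,l}$. Both existence and uniqueness of $u$ come down to invertibility of the Gram matrix of $(\ ,\ )_G$ on $V_l$.

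The crux is therefore non-degeneracy of $(\ ,\ )_G$ on $V_l$. I would span $V_l$ by ordered monomials ${\mathbf s}_{i,j_1}\cdots{\mathbf s}_{i,j_r}$ with $j_1+\cdots+j_r=l$ and each $j_t<l$. Iterating Proposition \ref{prop:Green95}(c) and repeatedly applying the primitivity of the ${\mathbf s}_{i,j_t}$ from the inductive hypothesis, each Gram entry $({\mathbf s}_{i,j_1}\cdots{\mathbf s}_{i,j_r},\, {\mathbf s}_{i,k_1}\cdots{\mathbf s}_{i,k_s})_G$ collapses to a shuffle-type sum over permutations matching the $j_t$ with the $k_t$, weighted by the twist factors from \eqref{eq:twisted-Hall} and by the diagonal values $({\mathbf s}_{i,k},{\mathbf s}_{i,k})_G = ({\mathbf s}_{i,k},{\mathbf e}_{i,k})_G$ (the last equality via (b) and (c)). These diagonals lie in $1 + v^{-1}\Z[[v^{-1}]]$ by \eqref{eq:norm_a}, hence are non-zero, and a direct computation parallel to the non-degeneracy of the Heisenberg shuffle pairing shows that the resulting Gram matrix is invertible.

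Property (d) is then settled by decomposing $\delta({\mathbf s}_{i,l}) = \sum_{m+n=l} X_{m,n}$ according to the bigrading on $H_{\mathbf{k}}(Q) \otimes H_{\mathbf{k}}(Q)$. Pairing against $\mathbf{1} \otimes z$ and $z \otimes \mathbf{1}$ via Proposition \ref{prop:Green95}(c), the non-degeneracy of $(\ ,\ )_G$ on $H_{\mathbf{k}}(Q)_{l\alpha_i}$ identifies $X_{0,l} = \mathbf{1}\otimes{\mathbf s}_{i,l}$ and $X_{l,0} = {\mathbf s}_{i,l}\otimes\mathbf{1}$. For the interior terms $0<m,n<l$, I first observe that $\delta(V) \subset V \otimes V$, since $\delta$ is an algebra homomorphism for the twisted product \eqref{eq:twisted-Hall} and $\delta({\mathbf e}_{i,k}) \in V \otimes V$ for $k<l$ by \eqref{eq:comult-comp}; moreover only the two boundary terms in $\delta({\mathbf e}_{i,l})$ escape $V \otimes V$. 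This gives $X_{m,n} \in V_m \otimes V_n$, where $V_m := V \cap H_{\mathbf{k}}(Q)_{m\alpha_i}$. Then for $y \in V_m$ and $z \in V_n$, property (b) yields $(X_{m,n}, y \otimes z)_G = ({\mathbf s}_{i,l}, yz)_G = 0$, since $yz \in V_l$; combined with non-degeneracy of $(\ ,\ )_G$ on $V_m \otimes V_n$ (via the technical lemma above applied at the lower levels $m,n<l$), this forces $X_{m,n}=0$.

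\textbf{Main obstacle.} The decisive step is establishing non-degeneracy of $(\ ,\ )_G$ on $V_l$. Global non-degeneracy on $H_{\mathbf{k}}(Q)_{l\alpha_i}$ does not automatically descend to a subspace, so the explicit shuffle computation, rooted in the inductive primitivity of the lower ${\mathbf s}_{i,k}$, is unavoidable. Once that lemma is in hand, the remainder of the argument is a routine assembly via the multiplication--comultiplication duality of Proposition \ref{prop:Green95}(c).
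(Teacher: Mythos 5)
Your overall strategy is correct and is essentially the argument behind the citation the paper uses (Bozec, Proposition 2.16): an inductive construction, writing $\mathbf{s}_{il}=\mathbf{e}_{il}+u$ with $u\in V_l$ determined by orthogonality to $V_l$, and deriving (d) via the multiplication--comultiplication adjunction applied to the bi-graded decomposition of $\delta(\mathbf{s}_{il})$. You also correctly identify the crux: non-degeneracy of $(\ ,\ )_G$ on $V_l$, which does not follow formally from non-degeneracy on the ambient graded piece. (The paper's own ``proof'' is a one-line reduction to the observation that $(\ ,\ )_G$ is non-degenerate on $H_{\mathbf{k}}(i)$ together with a citation, and does not spell this subtlety out.)

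That said, the route you propose for the crux---an explicit shuffle Gram-matrix computation---is overkill, and your assertion that such a computation is ``unavoidable'' is mistaken. By Proposition~\ref{prop:Green95}(b), $(\ ,\ )_G$ is \emph{diagonal} in the basis of isomorphism classes with entries $1/a_M$, which are positive real numbers; choosing $v_{\mathbf{k}}$ to be the positive square root of $\#\mathbf{k}$, each $\mathbf{e}_{ik}=v^{k^2}[E_{i,k}]$ is a positive real multiple of a basis vector, so $V$ and each $V_m$ have a real structure on which $(\ ,\ )_G$ is positive-definite. Positive-definiteness, unlike mere non-degeneracy, does descend to arbitrary subspaces, so $(\ ,\ )_G$ is automatically non-degenerate on $V_l$ (and on the $V_m\otimes V_n$ you need for (d)). This replaces the unproven shuffle step with something immediate. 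A second, smaller slip: you claim the diagonal values $(\mathbf{s}_{ik},\mathbf{s}_{ik})_G$ lie in $1+v^{-1}\Z[[v^{-1}]]$ ``by \eqref{eq:norm_a}''---but that equation and the one after it control $(\mathbf{e}_{ik},\mathbf{e}_{ik})_G$, not $(\mathbf{s}_{ik},\mathbf{s}_{ik})_G=(\mathbf{s}_{ik},\mathbf{e}_{ik})_G$, which differs by the term $(u,\mathbf{e}_{ik})_G$ with $u=\mathbf{s}_{ik}-\mathbf{e}_{ik}\in V_k$. The nonvanishing you actually need is again a consequence of positivity, since $\mathbf{s}_{ik}\neq 0$.
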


\vskip 3mm

As in the proof of Corollary \ref{cor:U+}, by (b) and (c), we see
that
$$({\mathbf s}_{il}, {\mathbf s}_{il})_{G} \neq 0 \ \ \text{for all} \ (i,l) \in
I^{\infty}.$$

\vskip 2mm

Therefore we obtain:

\begin{proposition} \label{prop:comp-a} \
{\rm The composition algebra $C_{\mathbf{k}}(Q)$ is a Green-Lusztig
algebra belonging to the class $\mathscr{L}(I^{\infty}, ( \ , \
)_{G}, \C, v_{\mathbf{k}})$. }
\end{proposition}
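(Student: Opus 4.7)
The plan is to verify the three axioms (a), (b), (c) of a Green-Lusztig algebra for $C_{\mathbf{k}}(Q)$, taking the distinguished generators to be the elements $\mathbf{s}_{il}$ from Proposition \ref{prop:s_{il}} and the bilinear form to be $(\,,\,)_{G}$.

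For axiom (a), I would first note that $C_{\mathbf{k}}(Q)$ inherits the $R^{+}$-grading from $H_{\mathbf{k}}(Q)$, which becomes a $\Lambda^{+}$-grading via the degree assignment $\alpha_{il} \mapsto l\alpha_{i}$; under this identification the form $(\alpha_{ik},\alpha_{jl}) = kl(\alpha_{i},\alpha_{j})$ on $\Lambda$ pulls back exactly the Euler form on $R$, so the twisted multiplications will match. Generation by the $\mathbf{s}_{il}$'s is immediate from Proposition \ref{prop:s_{il}}(a), since the algebra they generate coincides with the one generated by the $\mathbf{e}_{il}$'s. Finally $C_{\mathbf{k}}(Q)_{0} = \C\,\mathbf{1}$ because all generators have strictly positive degree.

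For axiom (b), the coproduct $\delta$ from \eqref{eq:comult-Hall} already maps $C_{\mathbf{k}}(Q)$ into $C_{\mathbf{k}}(Q) \otimes C_{\mathbf{k}}(Q)$, as was observed after \eqref{eq:comult-comp}. By Proposition \ref{prop:Green95}(a), $\delta$ is an algebra homomorphism with respect to the twisted multiplication \eqref{eq:twisted-Hall}, whose twisting factor $v^{(\underline{\dim}\,M_{2},\,\underline{\dim}\,N_{1})}$ agrees with the Green-Lusztig twist $v^{(\beta_{2},\gamma_{1})}$ under the grading identification above. Condition (b)(i) is then precisely Proposition \ref{prop:s_{il}}(d).

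For axiom (c), symmetry, orthogonality of distinct graded pieces, and normalization $(\mathbf{1},\mathbf{1})_{G}=1$ all follow directly from the formula $([M],[N])_{G} = \delta_{[M],[N]}/a_{M}$ in Proposition \ref{prop:Green95}(b). Nondegeneracy on the generators, i.e.\ $(\mathbf{s}_{il},\mathbf{s}_{il})_{G}\neq 0$, was already established in the paragraph preceding the proposition (using parts (b) and (c) of Proposition \ref{prop:s_{il}} to reduce to $(\mathbf{s}_{il},\mathbf{e}_{il})_{G}$ together with \eqref{eq:norm_a}). The Hopf-type compatibility $(a,bc)_{G} = (\delta(a), b\otimes c)_{G}$ is Proposition \ref{prop:Green95}(c).

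There is no serious obstacle: the proof is a bookkeeping assembly of the results recalled in this section. The only point that deserves explicit mention is the matching of the two twisted multiplications via the map $\alpha_{il}\mapsto l\alpha_{i}$, so that the datum $(I^{\infty},(\,,\,)_{G},\C,v_{\mathbf{k}})$ really is the one governing the Green-Lusztig structure; once this is said, conditions (a)-(c) follow immediately from Propositions \ref{prop:Green95} and \ref{prop:s_{il}}.
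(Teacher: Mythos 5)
Your proposal is correct and follows the same route as the paper: the paper proves this proposition by combining Proposition \ref{prop:Green95} (bilinear form, coproduct, Hopf compatibility) with Proposition \ref{prop:s_{il}} (primitive generators $\mathbf{s}_{il}$) and the remark that $(\mathbf{s}_{il},\mathbf{s}_{il})_{G}\neq 0$, exactly as you do. You simply spell out more explicitly the axiom-by-axiom bookkeeping (in particular the identification of the $\Lambda^{+}$-grading with the $R^{+}$-grading via $\alpha_{il}\mapsto l\alpha_i$), which the paper leaves implicit.
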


\vskip 3mm

The following proposition and its corollary show that the quantum
Serre relations hold in the composition algebra $C_{\mathbf{k}}(Q)$.

\vskip 3mm

\begin{proposition} \label{prop:Serre-comp} \
{\rm For every finite field $\mathbf{k}$, the following relations
hold.

\vskip 2mm

(a) If $a_{ij}=0$, then
$$[E_{i,k}][E_{j,l}] = [E_{j,l}][E_{i,k}].$$

\vskip 2mm

(b) If $i \in I^{\text{re}}$ and $i \neq (j,l)$, then we have
$$\sum_{k=0}^{1-l a_{ij}} (-1)^k [E_{i}]^{(k)} [E_{j,l}] [E_{i}]^{(1
- l a_{ij} -k)} =0,$$ where $[E_{i}]^{(k)} :=[E_{i}]^{k} \big /
[k]!$. }
\end{proposition}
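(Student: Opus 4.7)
The plan is to reduce both relations to radical statements via the universality of the Laurent polynomials $P_{w,w'}$ from Theorem~\ref{thm:Green95}, and then to upgrade an orthogonality inside $C_{\mathbf{k}}(Q)$ to an honest vanishing in $H_{\mathbf{k}}(Q)$ via a positivity argument in the Hall pairing. First, I define the algebra homomorphism $\pi\colon \mathscr{E} \to C_{\mathbf{k}}(Q)$ by $\pi(e_{il}) = \mathbf{e}_{il}$, well-defined by freeness of $\mathscr{E}$. Comparing \eqref{eq:twisted} with \eqref{eq:twisted-Hall} and \eqref{eq:co-mult} with \eqref{eq:comult-comp}, and using \eqref{eq:lambda-bilinear}, $\pi$ intertwines the two twisted multiplications and the two co-multiplications. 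Both $(\mathscr{E}, ( \ , \ )_{L})$ and $(C_{\mathbf{k}}(Q), ( \ , \ )_{G})$ are therefore Green-Lusztig algebras in the same class $\mathscr{L}(I^{\infty}, ( \ , \ ), \C(v), v)$ (cf.\ the Remark after Corollary~\ref{cor:U+} and Proposition~\ref{prop:comp-a}).

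Denote by $R = \sum_{w \in X(\beta)} c_{w}\, u_{w} \in \mathscr{E}_{\beta}$ the Serre element of (b), with $\beta = (1 - l a_{ij})\alpha_{i} + l\alpha_{j}$, and by $R' \in \mathscr{E}_{\beta'}$ the commutator $[e_{ik}, e_{jl}]$ of (a). By the Proposition following the definition of $\widehat{U}^{+}$, both $R$ and $R'$ lie in the radical of $( \ , \ )_{L}$. Lemma~\ref{lem:Green95} then gives $\sum_{w} c_{w}\, P_{w,w'}(v) = 0$ for every $w' \in X(\beta)$, and by Theorem~\ref{thm:Green95} the \emph{same} polynomials $P_{w,w'}$ govern the pairing on $C_{\mathbf{k}}(Q)$; specialising $v \mapsto v_{\mathbf{k}}$ yields
\[
(\pi(R),\, \mathbf{e}_{w'})_{G} \;=\; \Bigl(\textstyle\sum_{w} c_{w}\, P_{w,w'}(v_{\mathbf{k}})\Bigr)\, B_{\beta}(C_{\mathbf{k}}(Q)) \;=\; 0
\]
for every $w' \in X(\beta)$, and analogously for $\pi(R')$. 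Since $\pi(R) \in C_{\mathbf{k}}(Q)_{\beta}$ is itself a $\C$-linear combination of the $\mathbf{e}_{w'}$'s, it follows that $(\pi(R),\, \pi(R))_{G} = 0$, and likewise $(\pi(R'),\, \pi(R'))_{G} = 0$.

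The main obstacle is to upgrade these self-orthogonalities to the vanishings $\pi(R) = \pi(R') = 0$ inside $H_{\mathbf{k}}(Q)$, since a priori the restriction of $( \ , \ )_{G}$ to $C_{\mathbf{k}}(Q)$ need not be non-degenerate. My resolution is to fix $v_{\mathbf{k}} = \sqrt{q} \in \R_{>0}$, so that every structure constant in \eqref{eq:mult} is real and consequently every coefficient of $\pi(R), \pi(R')$ in the basis of isoclasses is real. Expanding $\pi(R) = \sum_{[M]} \lambda_{M}\, [M]$ with $\lambda_{M} \in \R$ and using the diagonal formula $([M], [N])_{G} = \delta_{[M],[N]}/a_{M}$ from Proposition~\ref{prop:Green95}(b),
\[
0 \;=\; (\pi(R),\, \pi(R))_{G} \;=\; \sum_{[M]\,:\,\underline{\dim}\,M = \beta} \frac{\lambda_{M}^{2}}{a_{M}}.
\]
Since each $a_{M} \in \Z_{>0}$, this sum of non-negative reals vanishes only if every $\lambda_{M} = 0$; hence $\pi(R) = 0$, and the identical argument gives $\pi(R') = 0$. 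Rescaling $\mathbf{e}_{il} = v^{l^{2}}[E_{i,l}]$ and $[E_{i}]^{(k)} = v^{-k} \mathbf{e}_{i}^{(k)}$ translates these vanishings into the stated relations in terms of $[E_{i}], [E_{i,k}], [E_{j,l}]$; the alternate sign choice $v_{\mathbf{k}} = -\sqrt{q}$ follows by the obvious symmetry of the Hall multiplication, completing the plan.
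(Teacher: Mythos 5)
Your strategy is genuinely different from the paper's: the paper proves the relations by a direct combinatorial computation inside $H_{\mathbf{k}}(Q)$ (counting filtrations and showing the coefficients $\gamma_{P}$ vanish via a $q$-binomial identity), whereas you try to import the relations from $\mathscr{E}$ by Green's universality and then conclude with a positivity argument. The positivity step is a nice observation and does work as stated: at $v_{\mathbf{k}}=\pm\sqrt{q}\in\R$ the structure constants in \eqref{eq:mult} are real and $([M],[N])_{G}=\delta_{[M],[N]}/a_{M}$ with $a_{M}\in\Z_{>0}$, so $(\pi(R),\pi(R))_{G}=\sum_{[M]}\lambda_{M}^{2}/a_{M}=0$ forces every $\lambda_{M}=0$. (You don't even need the separate remark about $v_{\mathbf{k}}=-\sqrt{q}$; the same computation handles it.)

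However, the transfer step via Theorem~\ref{thm:Green95} and Lemma~\ref{lem:Green95} has a real gap. Those results concern monomials $u_{w}=u_{x_{1}}\cdots u_{x_{r}}$ in the \emph{Green-Lusztig generators}, which by (b)(i) of the definition must be primitive: $\delta(u_{x})=u_{x}\otimes 1+1\otimes u_{x}$. For $\mathscr{E}$ these are the elements $s_{il}$ of Proposition~\ref{prop:Bozec-a}, and for $C_{\mathbf{k}}(Q)$ they are the $\mathbf{s}_{il}$ of Proposition~\ref{prop:s_{il}}; neither $e_{il}$ nor $\mathbf{e}_{il}$ is primitive when $l>1$. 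When you write $R=\sum_{w}c_{w}u_{w}$, compute $(\pi(R),\mathbf{e}_{w'})_{G}$, and invoke $P_{w,w'}$, you are implicitly treating $e_{il}$ and $\mathbf{e}_{il}$ as the Green-Lusztig generators, which the definition does not allow. To push the Green-route through you would have to expand $R$ in the $s_{w}$ basis, expand $\pi(R)$ in the $\mathbf{s}_{w}$ basis, and show that the two sets of coefficients match after specialisation---which requires proving $\pi(s_{il})=\mathbf{s}_{il}$, something you do not address and which in general needs an argument about how $\pi$ interacts with the bilinear forms.

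A cleaner way to salvage your plan is to drop Green's theorem entirely. You have already observed that $\pi$ is a surjective bialgebra homomorphism. Choosing the normalisations $\nu_{il}:=(\mathbf{e}_{il},\mathbf{e}_{il})_{G}$, the pullback $\pi^{*}(\,,\,)_{G}$ is a bilinear form on $\mathscr{E}$ satisfying all of (a)--(d) of Proposition~\ref{prop:bozec2014b}; by the uniqueness there it must equal $(\,,\,)_{L}$ for that choice of $\nu$. Hence $R\in\operatorname{rad}(\,,\,)_{L}$ immediately gives $(\pi(R),\pi(R))_{G}=0$, and your positivity argument finishes. This avoids the $e$-versus-$s$ confusion and removes the dependence on the universal polynomials $P_{w,w'}$ altogether. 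One last caveat worth flagging: this whole strategy presupposes the cited Proposition that the Serre and commutator elements lie in $\operatorname{rad}(\,,\,)_{L}$; since \cite{Ringel97} in that citation itself works in Hall algebras, you should make sure you are relying on the algebraic proof (as in \cite{Lus00, Bozec2014b}) and not silently reimporting the very Hall-algebra identity you are trying to establish. The paper's direct computation has the advantage of being entirely self-contained on the Hall side.
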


\begin{proof} \ Set $v=v_{\mathbf{k}}$. If $a_{ii}=0$, by the
duality, we have
$$[E_{i,k}][E_{i,l}] = \sum_{L} \alpha_{E_{i,k}, \, E_{i,l}}^{L} \, [L]
= \sum_{L} \alpha_{E_{i,l}^*, \, E_{i,k}^*}^{L*} \, [L^*] = \sum_{L}
\alpha_{E_{i,l}, \, E_{ik}}^{L} \, [L] = [E_{i,l}] [E_{i,k}].$$

\vskip 2mm

If $i \neq j$, $a_{ij}=0$ implies $c_{ij}=c_{ji}=0$. Thus
$Hom_{\mathbf{k} Q} (E_{i,k}, E_{j,l})=0$ and
$$[E_{i,k}] [E_{j,l}] = [E_{i,k} \oplus E_{j,l}] =
[E_{j,l}][E_{i,k}],$$ which proves (a).

\vskip 3mm To prove (b), by induction, we first verify
$$[E_{i}]^{(k)}= \dfrac{1}{[k]!}[E_{i}]^{k} = v^{k(k-1)} [E_{i}^{\oplus k}].$$

\vskip 2mm

Now we have
\begin{equation*}
\begin{aligned}
& [E_{i}]^{(k)}  [E_{j,l}]  = v^{k(k-1)} v^{\langle k \alpha_i, l
\alpha_j \rangle} \sum_{L} \alpha_{E_{i}^{\oplus k},\,
E_{j,l}}^{L}\,
[L] \\
& = v^{k(k-1)-kl c_{ij}} \sum_{L} \alpha_{E_{i}^{\oplus k},\,
E_{j,l}}^{L} \, [L],
\end{aligned}
\end{equation*}
where $L$ runs over $\mathbf{k} Q$-modules containing a submodule
$X$ such that $$X \cong E_{j,l}, \quad L \big / X \cong
E_{i}^{\oplus k}.$$ Since $Hom_{\mathbf{k}Q} (E_{i}, E_{j,l})=0$,
such a submodule $X$ is unique and hence
$$\alpha_{E_{i}^{\oplus k},\, E_{j,l}}^{L} =1 \ \ \text{for all} \
L.$$

\vskip 2mm \noindent It follows that
$$[E_{i}]^{(k)} [E_{j,l}] = v^{k(k-1) - kl c_{ij}} \sum_{L} \, [L],$$
where $L$ contains a (unique) submodule $X$ such that $X \cong
E_{j,l}$, $L \big / X \cong E_{i}^{\oplus k}$.

\vskip 3mm

Hence for any $n \ge 0$, we have
\begin{equation*}
\begin{aligned}
& [E_{i}]^{(k)}  [E_{j,l}] [E_{i}]^{(n)} = (v^{k(k-1) - kl c_{ij}}
\sum_{L} \, [L]) \, E_{i}^{(n)} \\
&= v^{k(k-1) - kl c_{ij}} \, v^{\langle k \alpha_i + l \alpha_j, n
\alpha_i \rangle } \, v^{n(n-1)} \, \sum_{L} \sum_{P} \alpha_{L,
E_{i}^{\oplus n}}^{P} \, [P] \\
& = v^{k(k-1)+n(n+1)+kn - kl c_{ij} - ln c_{ji}} \, \sum_{P}
(\sum_{L} \alpha_{L, E_{i}^{\oplus n}}^{P}) \, [P],
\end{aligned}
\end{equation*}
where
$$\alpha_{L, E_{i}^{\oplus n}}^{P} =\# \{Y \subset P \mid
Y \cong E_{i}^{\oplus n}, \, P \big / Y \cong L \}.$$

\vskip 2mm

Set \begin{equation*}
\begin{aligned}
& K_{P}:= \bigcap_{h: i \rightarrow j} \text{Ker} (x_{h}: {\mathbf
k}^{\oplus (k+n)} \rightarrow {\mathbf k}^{l}) \subset P_{i}, \\
& J_{P}:= \sum_{h':j \rightarrow i} \text{Im}(x_{h'}: {\mathbf
k}^{l} \rightarrow {\mathbf k}^{\oplus (k+n)}) \subset P_{i}, \\
& m_{P}:=\dim K_{P}, \quad n_{P}:= \dim J_{P}.
\end{aligned}
\end{equation*}

\vskip 2mm \noindent Then $P \big/ Y \cong L$ if and only if

\begin{itemize}
\item[(i)] $\underline{\dim}\, Y = n \alpha_i$,

\vskip 2mm

\item[(ii)] $x_{h}=0$ for all $h:i \rightarrow j$,

\vskip 2mm

\item[(iii)] $\text{Im} x_{h'} \subset Y$ for all $h':j \rightarrow
i$.
\end{itemize}

\vskip 2mm

\noindent Hence we have
\begin{equation*}
\begin{aligned}
& \beta_{P,n}:= \sum_{L} \alpha_{L, E_{i}^{\oplus n}}^{P}
 = \sum_{L} \# \{Y \subset P \mid Y \cong E_{i}^{\oplus n}, \ P/Y
\cong L \} \\
& = \# \{\text{$n$-dimensional subspaces $Y$ of $K_{P}$ containing
$J_{P}$} \} \\
& = \# \{\text{$(n-n_{P})$-dimensional subspaces of $K_{P}/J_{P}$}
\}
\\
&=\# Gr_{\mathbf{k}}\binom{m_{P}-n_{P}}{n-n_{P}}
=v^{(m_{P}-n)(n-n_{P})} {{m_{P}-n_{P}} \brack {n-n_{P}}},
\end{aligned}
\end{equation*}
which implies
$$[E_{i}]^{(k)} [E_{j,l}][E_{i}]^{(n)} = v^{k(k-1) + n(n-1) + kn -
klc_{ij} - ln c_{ji}} \sum_{P} v^{(m_{P}-n)(n-n_{P})} {{m_{P}-n_{P}}
\brack {n-n_{P}}} \, [P].$$

\vskip 2mm

By setting $n=1 - la_{ij} -k$ and summing up, we obtain
$$\sum_{k=0}^{1- l a_{ij}} (-1)^{k} [E_{i}]^{(k)} [E_{j,l}]
[E_{i}]^{(1-la_{ij}-k)}=\sum_{P\,:\, J_{P} \subset K_{P}} \gamma_{P}
[P],$$ where
\begin{equation*}
\begin{aligned}
\gamma_{P} & = \sum_{k=0}^{1 - l a_{ij}} (-1)^k v^{k(k-1)+n(n-1)+kn
- kl c_{ij} -ln c_{ji} + (m_{P}-n)(n-n_{P})} {{m_{P}-n_{P}} \brack
{n-n_{P}}} \\
& = \sum_{n=0}^{1 - l a_{ij}} (-1)^{1-l a{ij}-n} v^{l
c_{ji}(1-la_{ij})+ n(-2l c_{ji} + m_{P}+n_{P} -1) -m_{P}
n_{P}}{{m_{P}-n_{P}} \brack
{n-n_{P}}} \\
& = (-1)^{1-la_{ij}} v^{l c_{ji}(1-l a_{ij}) - m_{P} n_{P}}
\sum_{n=n_{P}}^{m_{P}} (-1)^n v^{n(-2l c_{ji} + m_{P} + n_{P} -1)}
{{m_{P}-n_{P}} \brack {n-n_{P}}}.
\end{aligned}
\end{equation*}

Let $$\gamma_{P}^{0}:=\sum_{n=n_{P}}^{m_{P}} (-1)^n v^{n(-2l c_{ji}
+ m_{P} + n_{P} -1)} {{m_{P}-n_{P}} \brack {n-n_{P}}}.$$

\noindent Note that $\dim \text{Im}\, x_{h} \le l c_{ij}$ and $n_{P}
= \dim J_{P} \le l c_{ji}$. Hence we have
$$m_{P}=\dim K_{P} \ge 1 - l a_{ij} - l c_{ij} = 1 + l c_{ji} > n_{P}
$$
and obtain
\begin{equation*}
\begin{aligned}
& (m_{P} - n_{P} -1) -(-2l c_{ji} + m_{P} + n_{P} -1) =2(l c_{ji} -
n_{P}) \ge 0, \\
& (-2l c_{ji} + m_{P} + n_{P} -1) -(-m_{P} + n_{P} +1) =2(m_{P} - l
c_{ji} -1) \ge 0,
\end{aligned}
\end{equation*}
which yield
$$-m_{P} + n_{P} + 1 \le -2l c_{ji} + m_{P} + n_{P} -1 \le m_{P} -
n_{P} -1.$$

\vskip 2mm

It is well-known that
$$\sum_{k=0}^m (-1)^k v^{dk} {m \brack k} =0$$ for all $m\ge 1$,
$-m+1 \le d \le m-1$, $d \equiv m-1 \ (\text{mod}\,2)$ (see, for
example, \cite{Kas91}).

\vskip 3mm Therefore, since $-2l c_{ji} + m_{P} + n_{P} -1 \equiv
m_{P} - n_{P} -1 \ (\text{mod} \, 2)$, we have
\begin{equation*}
\begin{aligned}
\gamma_{P}^{0} & = \sum_{n=n_{P}}^{m_{P}} (-1)^n v^{n(-2l c_{ji} +
m_{P} + n_{P} -1)} {{m_{P}-n_{P}} \brack {n - n_{P}}} \\
& = \sum_{r=0}^{m_{P}-n_{P}} (-1)^{r + n_{P}} v^{r + n_{P}(-2l
c_{ji} + m_{P} + n_{P} -1)} {{m_{P}-n_{P}} \brack r}\\
& =(-1)^{n_{P}} v^{n_{P}(-2l c_{ji} + m_{P} + n_{P} -1)}
\sum_{r=0}^{m_{P}-n_{P}} (-1)^r v^{r(-2lc_{ji} + m_{P} + n_{P} -1)}
{{m_{P} - n_{P}} \brack r} \\
&=0.
\end{aligned}
\end{equation*}
Hence we conclude $\gamma_{P}=0$ for all $P$, which proves our
assertion.
\end{proof}

\vskip 3mm

\begin{corollary} \label{cor:Serre-comp} \
{\rm For every finite field $\mathbf{k}$, the following relations
hold.

\vskip 2mm

(a) If $a_{ij}=0$, then
$${\mathbf e}_{ik} \, {\mathbf e}_{jl} = {\mathbf e}_{jl} \, {\mathbf e}_{ik}.$$

\vskip 2mm

(b) If $i \in I^{\text{re}}$ and $i \neq (j,l)$, then we have
$$\sum_{k=0}^{1-l a_{ij}} (-1)^k {\mathbf e}_{i}^{(k)}
{\mathbf e}_{jl} \, {\mathbf e}{i}^{(1 - l a_{ij} -k)} =0,$$ where
${\mathbf e}_{i}^{(k)} :={\mathbf e}_{i}^{k} \big / [k]!$. }
\end{corollary}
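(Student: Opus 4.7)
The plan is to deduce Corollary \ref{cor:Serre-comp} directly from Proposition \ref{prop:Serre-comp} by a routine rescaling, exploiting the identity ${\mathbf e}_{il} = v^{l^2}[E_{i,l}]$ together with the fact that scalars commute with the twisted Ringel--Hall product. Any polynomial identity among the classes $[E_{i,l}]$ will therefore transform into the corresponding identity among the ${\mathbf e}_{i,l}$ once one tracks an overall power of $v$ picked up per factor.

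For part (a), I would simply multiply the identity $[E_{i,k}][E_{j,l}]=[E_{j,l}][E_{i,k}]$ of Proposition \ref{prop:Serre-comp}(a) through by $v^{k^2+l^2}$; the two sides become ${\mathbf e}_{ik}{\mathbf e}_{jl}$ and ${\mathbf e}_{jl}{\mathbf e}_{ik}$ respectively, and the claim is immediate.

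For part (b), I would first record that $i \in I^{\text{re}}$ in the present symmetric setting forces $s_i = 1$, so that $v_i = v$ and the two factorial conventions agree: $[k]_i! = [k]!$. Combining this with ${\mathbf e}_i = v[E_i]$ yields ${\mathbf e}_i^{(k)} = v^k [E_i]^{(k)}$. Substituting into the left-hand side of (b) and collecting the three scalar contributions produces, for each summand, the common prefactor
$$v^{k} \cdot v^{l^2} \cdot v^{1-la_{ij}-k} \;=\; v^{1+l^2-la_{ij}},$$
which is manifestly independent of the summation index $k$. Pulling this $v$-power out of the alternating sum reduces the identity at once to the one already established in Proposition \ref{prop:Serre-comp}(b).

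I do not expect any substantive obstacle; the only point requiring care is the notational reconciliation $[k]_i! = [k]!$ for real $i$ in the symmetric setting. Once this is noted, the corollary follows formally from its non-divided counterpart via the rescaling outlined above.
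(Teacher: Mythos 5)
Your proof is correct and coincides with the approach the paper implicitly takes: the paper states the corollary without proof, treating it as an immediate consequence of Proposition \ref{prop:Serre-comp} via the rescaling ${\mathbf e}_{il} = v^{l^2}[E_{i,l}]$, which is exactly the computation you carry out. The only cosmetic remark is that the reconciliation $[k]_i! = [k]!$ is not strictly needed here, since both the Proposition and the Corollary already write $[k]!$; it becomes relevant only when one later matches these divided powers with the $e_i^{(k)} = e_i^k/[k]_i!$ of $\widehat U^{+}$.
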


\vskip 8mm

\section{Ringel-Hall algebra construction of $U_{v}^{+}(\g_{Q})$}

\vskip 2mm

Let $K$ be an infinite set of mutually non-isomorphic finite fields.
For each $\mathbf{k} \in K$, choose $v_{\mathbf{k}} \in \C$ such
that $v_{\mathbf{k}}^2 = \#(\mathbf{k})$ and set
\begin{equation} \label{eq:generic-Hall}
H(Q):= \prod_{\mathbf{k} \in K} H_{\mathbf{k}}(Q),
\end{equation}
the {\it generic Ringel-Hall algebra}.

\vskip 3mm

Let $v$ be an indeterminate. Then $H(Q)$ can be regarded as a $\C[v,
v^{-1}]$-module via
$$v^{\pm 1} \longmapsto (v_{\mathbf{k}}^{\pm 1})_{\mathbf{k} \in K}.$$
For each $(i,l) \in I^{\infty}$, let $E_{i,l;\mathbf{k}}$ be the
representation of $Q$ over $\mathbf{k}$ defined in \eqref{eq:E_{il}}
and let ${\mathbf s}_{i,l; \mathbf{k}}$ be the element in
$H_{\mathbf{k}}(Q)$ given in Proposition \ref{prop:s_{il}}. Set
\begin{equation}
{\mathbf E_{i,l}}:= ({\mathbf e}_{i,l; \mathbf{k}})_{\mathbf{k} \in
K}=(v_{\mathbf{k}}^{l^2}[E_{i,l; \mathbf{k}}])_{\mathbf{k} \in K},
\qquad {\mathbf S}_{i,l}:=({\mathbf s}_{i,l;\mathbf{k}})_{\mathbf{k}
\in K}.
\end{equation}

\vskip 3mm

\begin{definition} \label{def:generic-comp} \
{\rm The {\it generic composition algebra of $Q$} is the $\C[v,
v^{-1}]$-subalgebra $C(Q)$ of $H(Q)$ generated by ${\mathbf
E}_{i,l}$ for all $(i,l) \in I^{\infty}$.}
\end{definition}

\vskip 3mm

By Proposition \ref{prop:comp-a}, the generic composition algebra
$C(Q)$ is a Green-Lusztig algebra belonging to the class
$\mathscr{L}(I^{\infty}, ( \ , \ )_{G}, \C[v, v^{-1}], v)$. We now
state and prove the main theorem of this paper.

\vskip 3mm

\begin{theorem} \label{thm:main} \ {\rm
There exists a natural isomorphism of $\C(v)$-bialgebras $$\Phi:
U_{v}^{+}(\g_{Q}) \rightarrow \C(v) \otimes_{\C[v, v^{-1}]} C(Q)$$
given  by
$$e_{il} \longmapsto {\mathbf E}_{i,l} \ \ \text{for all}  \ (i,l) \in I^{\infty}.$$
}
\end{theorem}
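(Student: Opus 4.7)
The plan is to realize both $U_v^+(\g_Q)$ and $\C(v) \otimes_{\C[v, v^{-1}]} C(Q)$ as non-degenerate quotients of the free algebra $\mathscr E$ by the radical of compatibly normalized Green-Lusztig bilinear forms, and then identify these quotients. First, I would lift to a $\C(v)$-algebra homomorphism $\tilde\Phi: \mathscr E \to \C(v) \otimes_{\C[v, v^{-1}]} C(Q)$ via $e_{il} \mapsto \mathbf E_{i,l}$, which is well-defined by freeness of $\mathscr E$ and surjective by Definition \ref{def:generic-comp}. Comparing \eqref{eq:co-mult} with \eqref{eq:comult-comp}, and observing that the twist factors in \eqref{eq:twisted} and \eqref{eq:twisted-Hall} agree, shows that $\tilde\Phi$ intertwines $\delta$ on generators and hence everywhere; so $\tilde\Phi$ is a bialgebra homomorphism.

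Next I would calibrate the bilinear forms so that they correspond under $\tilde\Phi$. Using Proposition \ref{prop:bozec2014b}, fix $(\ ,\ )_L$ on $\mathscr E$ by the normalization $\nu_{il} := (\mathbf E_{i,l}, \mathbf E_{i,l})_G$. Both $\mathscr E$ (Remark after Corollary \ref{cor:U+}) and $\C(v) \otimes_{\C[v, v^{-1}]} C(Q)$ (Proposition \ref{prop:comp-a} after base change) then belong to the class $\mathscr L(I^\infty, (\ ,\ ), \C(v), v)$. Green's Theorem \ref{thm:Green95} furnishes a single family of universal Laurent polynomials $P_{w,w'}(t) \in \Z[t, t^{-1}]$ controlling both forms, so that for all $w, w' \in X(\beta)$,
$$(e_w, e_{w'})_L \;=\; P_{w,w'}(v) \prod_{(i,l)} \nu_{il}^{d_{il}} \;=\; (\mathbf E_w, \mathbf E_{w'})_G \;=\; (\tilde\Phi(e_w), \tilde\Phi(e_{w'}))_G,$$
and by bilinearity $(u, u')_L = (\tilde\Phi(u), \tilde\Phi(u'))_G$ for all $u, u' \in \mathscr E$. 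The inclusion $\Ker \tilde\Phi \subseteq \mathrm{rad}(\ ,\ )_L$ is immediate from this isometry; conversely, any $u \in \mathrm{rad}(\ ,\ )_L$ has $\tilde\Phi(u)$ orthogonal to the image of $\tilde\Phi$, which is all of $\C(v) \otimes_{\C[v, v^{-1}]} C(Q)$, and non-degeneracy of $(\ ,\ )_G$ there forces $\tilde\Phi(u) = 0$. Combined with the identification $U_v^+(\g_Q) = \mathscr E / \mathrm{rad}(\ ,\ )_L$ (since the Serre and commutativity defining relations of $\widehat U^+$ already lie in $\mathrm{rad}(\ ,\ )_L$ on $\mathscr E$, and $\widetilde U^+ = \widehat U^+$), $\tilde\Phi$ descends to the desired bialgebra isomorphism $\Phi$.

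The main obstacle is the non-degeneracy of $(\ ,\ )_G$ on $\C(v) \otimes_{\C[v, v^{-1}]} C(Q)$ invoked in the last step. Establishing it requires transporting the primitive generators $\mathbf S_{i,l}$ of Proposition \ref{prop:s_{il}} through the generic construction and running the argument of Proposition \ref{prop:Bozec-b} in parallel, so as to build a partition-indexed basis of each homogeneous piece of $C(Q)$ pairing non-trivially under $(\ ,\ )_G$. Once this input is secured, Green's theorem carries out the essential comparison, reducing the two Green-Lusztig structures to the same universal Laurent polynomials $P_{w,w'}(v)$.
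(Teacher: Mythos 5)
Your overall architecture (realize both sides as non-degenerate quotients of $\mathscr E$ by the radical of matching Hopf-adjoint forms) is sensible, but it diverges from the paper's proof and has two real problems.

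First, the appeal to Theorem \ref{thm:Green95} to get $(e_w, e_{w'})_L = P_{w,w'}(v)\prod \nu_{il}^{d_{il}}$ is a misapplication. In the definition of a Green--Lusztig algebra, the generators $u_x$ must be primitive, i.e.\ $\delta(u_x)=u_x\otimes 1 + 1\otimes u_x$. For $i\in I^{\mathrm{im}}$ and $l\ge 2$ the element $e_{il}$ is \emph{not} primitive (see \eqref{eq:co-mult}); the primitive generators of $\mathscr E$ and of $U_v^+(\g_Q)$ are the $s_{il}$ from Proposition \ref{prop:Bozec-a}, and those of $C_{\mathbf k}(Q)$ are the $\mathbf s_{il;\mathbf k}$ from Proposition \ref{prop:s_{il}}. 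This is precisely why the paper runs the entire injectivity argument through the monomials $s_w$ and $\mathbf s_{w;\mathbf k}$ rather than through $e_w$ and $\mathbf E_w$. The isometry you want is in fact true, but the correct justification is the uniqueness clause of Proposition \ref{prop:bozec2014b} (the Hopf-adjoint form is determined by the $\nu_{il}$ and by $\delta$, which $\tilde\Phi$ intertwines), not Theorem \ref{thm:Green95} applied to non-primitive monomials.

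Second, the non-degeneracy of $(\ ,\ )_G$ on $\C(v)\otimes_{\C[v,v^{-1}]} C(Q)$ that your final step requires is a genuine gap, and the proposed repair — rebuilding a pairing basis in $C(Q)$ by "running Proposition \ref{prop:Bozec-b} in parallel" — essentially re-proves the theorem and risks circularity. The paper never invokes, or even defines, such non-degeneracy. Its injectivity argument goes the other way: take $u=\sum c_w(v)s_w \in \Ker\Phi$, pair $\Phi(u)$ against the $\mathbf s_{w';\mathbf k}$ in each $C_{\mathbf k}(Q)$ separately, apply Theorem \ref{thm:Green95} in each $C_{\mathbf k}(Q)$ and in $U_v^+(\g_Q)$ to get the same polynomials $P_{w,w'}$, use that $K$ is infinite to upgrade $\sum c_w(v_{\mathbf k})P_{w,w'}(v_{\mathbf k})=0$ to the identity $\sum c_w(v)P_{w,w'}(v)=0$, and then invoke Lemma \ref{lem:Green95} to conclude $u\in\mathrm{rad}(\ ,\ )_L$. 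The non-degeneracy actually used is that of $(\ ,\ )_L$ on $U_v^+(\g_Q)$, which is already available from Proposition \ref{prop:Bozec-b}. If you insist on your isometry-based route, you should do the same: after establishing well-definedness via Corollary \ref{cor:Serre-comp}, the isometry plus non-degeneracy of $(\ ,\ )_L$ on $U_v^+(\g_Q)$ already gives injectivity, and you never need to know anything about the radical of $(\ ,\ )_G$ on $C(Q)$.
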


\begin{proof} \ By Corollary \ref{cor:Serre-comp}, $\Phi$ defines
a surjective $\C(v)$-bialgebra homomorphism.

\vskip 3mm

To prove the injectivity of $\Phi$, we will use Theorem
\ref{thm:Green95}. Let $\Lambda: = \bigoplus_{(i,l) \in I^{\infty}}
\Z \alpha_{i,l}$ and let $\Lambda^{+}:= \sum_{(i,l) \in I^{\infty}}
\Z_{\ge 0} \alpha_{i,l}$. For $\beta = \sum d_{i,l} \, \alpha_{i,l}
\in \Lambda^{+}$, set
$$I^{\infty}(\beta):=\{ w = ((i_1, l_1), \ldots,
(i_r, l_r)) \mid \alpha_{i_1, l_1} + \cdots + \alpha_{i_r, l_r} =
\beta \}.$$

\vskip 2mm

For each $w=((i_1, l_1), \ldots, (i_r, l_r)) \in I^{\infty}(\beta)$,
we denote the generating monomials by
\begin{equation*}
\begin{aligned}
& s_{w} := s_{i_1, l_1} \cdots s_{i_r, l_r} \in
U_{v}^{+}(\g_{Q})_{\beta}, \\
& {\mathbf s}_{w; \mathbf{k}}:= {\mathbf s}_{i_1, l_1;\mathbf{k}}
\cdots {\mathbf s}_{i_r, l_r;\mathbf{k}} \in
C_{\mathbf{k}}(Q)_{\beta}, \\
& {\mathbf S}_{w}:=({\mathbf s}_{w; \mathbf{k}})_{\mathbf{k} \in
K}={\mathbf S}_{i_1, l_1} \cdots {\mathbf S}_{i_r, l_r}  \in
C(Q)_{\beta}.
\end{aligned}
\end{equation*}
Then one can see that $s_{w}$ is mapped onto ${\mathbf S}_{w}$ under
the homomorphism $\Phi$.

\vskip 3mm

By Theorem \ref{thm:Green95}, for all $\beta = \sum d_{i,l} \,
\alpha_{i,l} \in \Lambda^{+}$, $w, w' \in I^{\infty}(\beta)$ and
$\mathbf{k} \in K$, there exists a polynomial $P_{w, w'}(t) \in
\Z[t, t^{-1}]$ such that
\begin{itemize}
\item[(i)] $(s_{w}, s_{w'})_{L} = P_{w, w'}(v) \prod_{(i,l) \in
I^{\infty}} (s_{i,l}, s_{i,l})_{L}^{d_{i,l}}$,

\vskip 2mm

\item[(ii)] $({\mathbf s}_{w; \mathbf{k}}, {\mathbf s}_{w'; \mathbf{k}})_{G} =
P_{w,w'}(v_{\mathbf{k}}) \prod_{(i,l)\in I^{\infty}} ({\mathbf
s}_{i,l;\mathbf{k}}, {\mathbf s}_{i,l;\mathbf{k}})_{G}^{d_{i,l}}.$
\end{itemize}

\vskip 3mm

Let $u=\sum_{w} c_{w}(v) s_{w} \in \text{Ker}\, \Phi \subset
U_{v}^{+}(\g_{Q})$. Thus \, $\sum_{w} c_{w}(v)  \,{\mathbf
S}_{w}=0$, which implies
$$\sum_{w} c_w(v_{\mathbf{k}}) \, {\mathbf s}_{w;\mathbf{k}} =0 \ \ \text{for
all} \ \mathbf{k} \in K.$$ Then, for any $w' \in I^{\infty}(\beta)$,
we have
$$0=\sum_{w} c_{w}(v_{\mathbf{k}}) ({\mathbf s}_{w;\mathbf{k}},\,
{\mathbf s}_{w';\mathbf{k}})_{G} =\sum_{w} c_{w}(v_{\mathbf{k}})
P_{w, w'}(v_{\mathbf{k}}) \prod_{(i,l) \in I^{\infty}} ({\mathbf
s}_{i,l;\mathbf{k}}, {\mathbf s}_{i,l;\mathbf{k}})_{G}^{d_{i,l}}.$$
It follows that
$$\sum_{w} c_{w}(v_{\mathbf{k}}) P_{w,w'}(v_{\mathbf{k}})=0 \ \
\text{for all} \ w' \in I^{\infty}(\beta) \, \mathbf{k} \in K.$$
Therefore $\sum_{w} c_{w}(v) P_{w,w'}(v)=0$ for all $w' \in
I^{\infty}(\beta)$.

\vskip 3mm

By Lemma \ref{lem:Green95}, we have
$$u = \sum_{w} c_{w}(v) s_{w} \in \text{rad}( \ , \ )_{L}.$$
Since $( \ , \ )_{L}$ is non-degenerate on $U_{v}^{+}(\g_{Q})$, we
conclude $u=0$ and hence $\Phi$ is injective.
\end{proof}

\vskip 5mm

{\it Acknowledgements}. The author would like to thank Professor
Tristan Bozec for very helpful suggestions. He is also grateful to
Professor Jae-Hoon Kwon and Professor Young-Tak Oh for their help
and support.

\vskip 10mm

\end{document}